\newtheorem{thm}{Theorem}
\newtheorem{cor}{Corollary}
\newtheorem{lem}{Lemma}[section]
\newtheorem{prop}[lem]{Proposition}
\newtheorem{rem}{Remark}
\newtheorem{defn}[lem]{Definition}
\newcommand{\C}{\mathbb{C}}
\newcommand{\HH}{\mathbb{H}}
\newcommand{\HC}{\mathbb{H}^{\C}}
\newcommand{\R}{\mathbb{R}}
\newcommand{\Z}{\mathbb{Z}}
\def\bfz{F}
\DeclareMathOperator{\re}{Re}
\DeclareMathOperator{\im}{Im}
\begin{document}
\title{Constant angle surfaces in 4-dimensional Minkowski space}

\author{Pierre Bayard, Juan Monterde, Ra\'ul C. Volpe}

\maketitle

\begin{abstract}
We first define a complex angle between two oriented spacelike planes in  4-dimensional Minkowski space, and then study the constant angle surfaces in that space, i.e. the oriented spacelike surfaces whose tangent planes form a constant complex angle with respect to a fixed spacelike plane. This notion is the natural Lorentzian analogue of the notion of constant angle surfaces in 4-dimensional Euclidean space. We prove that these surfaces have vanishing Gauss and normal curvatures, obtain representation formulas for the constant angle surfaces with regular Gauss maps and construct constant angle surfaces using PDE's methods. We then describe their invariants of second order and show that a surface with regular Gauss map and constant angle $\psi\neq 0\ [\pi/2]$ is never complete. We finally study the special cases of surfaces with constant angle $\pi/2\ [\pi],$ with real or pure imaginary constant angle and describe the constant angle surfaces in hyperspheres and lightcones. 
\end{abstract}
\noindent
{\it Keywords:} Minkowski space, spacelike surfaces, complex angle, constant angle surfaces.\\\\
\noindent
{\it 2010 Mathematics Subject Classification:} 53C40, 53C42, 53C50.

\date{}
\maketitle\pagenumbering{arabic}
\section*{Introduction}
A constant angle surface in $\R^3$ is a surface whose tangent planes have a constant angle with respect to some fixed direction in $\R^3.$ Constant angle surfaces in $\R^3$ have been studied in \cite{DRH,RH}. This notion has then been extended to other geometric contexts, especially to hypersurfaces in $\R^n$ \cite{DRH}, surfaces in $\R^4$ \cite{BdSOR} or surfaces in 3-dimensional Minkowski space $\R^{1,2}$ \cite{LM}. The aim of the paper is to introduce the notion of constant angle surfaces in 4-dimensional Minkowski space $\R^{1,3},$ and study their main properties. We will first observe that a natural complex angle is defined between two oriented spacelike planes in $\R^{1,3},$ and then define a constant angle surface in $\R^{1,3}$ as an oriented spacelike surface whose tangent planes form a constant complex angle with respect to some fixed oriented spacelike plane. This notion appears to be the natural Lorentzian analogue of the notion of constant angle surface in $\R^4,$ and also extends the definitions in the literature of constant angle surfaces in $\R^3$ and in $\R^{1,2}.$ Let us note that the definition of the complex angle between two oriented spacelike planes seems to be new and might be of independent interest. We will suppose throughout the paper that the surfaces have regular Gauss map: under this additional assumption, we will obtain general representation formulas for the constant angle surfaces in $\R^{1,3}.$ These formulas rely on a representation formula for surfaces with regular Gauss map and vanishing Gauss and normal curvatures in $\R^{1,3}$ given in \cite{GMM}, and reformulated in terms of spin geometry in \cite{Bay}. With these formulas at hand, the construction of a constant angle surface amounts to solving a PDE system. We will then describe the invariants of second order of the constant angle surfaces and prove that a surface with regular Gauss map and constant angle $\psi\neq 0\ [\pi/2]$ is never complete. Finally, we will show that a surface has constant angle $\pi/2\ [\pi]$ if and only if it is a product of curves in orthogonal planes, we will study the surfaces with real or pure imaginary constant angle and describe the constant angle surfaces in hyperspheres and lightcones. Additionally to the constructions of the constant angle surfaces using PDE's methods, we will give explicit concrete examples of constant angle surfaces in the spirit of \cite{MV}.

The paper is organized as follows. We describe the Clifford algebra and the spin group of $\R^{1,3}$ in Section \ref{section clifford}, we introduce the complex angle between two oriented spacelike planes of $\R^{1,3}$ in Section \ref{section complex angle} and the notion of constant angle surface in $\R^{1,3}$ in Section \ref{section first properties}. We then give general representation formulas and reduce the construction of these surfaces to the resolution of a PDE in Sections \ref{section representation 1} and \ref{section representation 2}, and introduce a frame adapted to a constant angle surface and describe the second order invariants of a constant angle surface in Section \ref{section adapted frame}. We study the completeness of the constant angle surfaces in Section \ref{section complete}. We finally study the constant angle surfaces which are product of plane curves in Section \ref{section product}, surfaces with real or pure imaginary constant angle in Section \ref{section real im angle} and constant angle surfaces in hyperspheres and lightcones in Section \ref{section spheres lightcones}. Three appendices end the paper: we give in the first appendix an alternative construction and an elementary characterization of the complex angle between two oriented spacelike planes, we describe in the second appendix the very special case of the surfaces of constant angle $0\ [\pi]$ and we detail in the third appendix the computations leading to an explicit frame adapted to a constant angle surface.
\section{Clifford algebra and spin group of $\R^{1,3}$}\label{section clifford}
The Minkowski space $\R^{1,3}$ is the space $\R^4$ endowed with the metric
$$g=-dx_0^2+dx_1^2+dx_2^2+dx_3^2.$$
We recall here the description of the Clifford algebra and the spin group of $\R^{1,3}$ using the complex quaternions, as introduced in \cite{Bay}, and refer to \cite{Fr} for the general basic properties of the Clifford algebras and the spin groups. Let $\HH^{\C}$ be the space of complex quaternions, defined by
$$\HH^{\C}:=\{q_01+q_1I+q_2J+q_3K,\ q_0,q_1,q_2,q_3\in\C\}$$
where $I,J,K$ are such that
$$I^2=J^2=K^2=-1,\hspace{1cm} IJ=-JI=K.$$
Using the Clifford map 
\begin{eqnarray}
\R^{1,3}&\rightarrow&\HC(2)\label{Clifford map}\\
(x_0,x_1,x_2,x_3)&\mapsto& \left(\begin{array}{cc}0&ix_01+x_1I+x_2J+x_3K\\-ix_01+x_1I+x_2J+x_3K&0\end{array}\right)\nonumber
\end{eqnarray}
where $\HC(2)$ stands for the set of $2\times 2$ matrices with entries belonging to the set of complex quaternions $\HC,$ the Clifford algebra of $\R^{1,3}$ is
$$Cl(1,3)=\left\{\left(\begin{array}{cc}a&b\\\widehat{b}&\widehat{a}\end{array}\right),\ a,b\in\HC\right\}$$
where, if $\xi=q_01+q_1I+q_2J+q_3K$ belongs to $\HC,$ we denote  
$$\widehat{\xi}:=\overline{q_0}1+\overline{q_1}I+\overline{q_2}J+\overline{q_3}K,$$
the element of $\HC$ obtained by the usual complex conjugation of its coefficients.
The Clifford sub-algebra of elements of even degree is
\begin{equation}\label{identification Cl0}
Cl_0(1,3)=\left\{\left(\begin{array}{cc}a&0\\0&\widehat{a}\end{array}\right),\ a\in\HC\right\}\simeq\HC.
\end{equation}
Let us consider the bilinear map $H:\HC\times\HC\rightarrow\C$ defined by
\begin{equation}\label{H HC}
H(\xi,\xi')=q_0q_0'+q_1q_1'+q_2q_2'+q_3q_3'
\end{equation}
where $\xi=q_01+q_1I+q_2J+q_3K$ and $\xi'=q_0'1+q_1'I+q_2'J+q_3'K.$ It is $\C$-bilinear for the natural complex structure $i$ on $\HC.$ We consider
\begin{equation}\label{spin13 S3}
Spin(1,3):=\{q\in\HC:\ H(q,q)=1\}\hspace{.5cm}\subset \hspace{.5cm}Cl_0(1,3).
\end{equation}
Using the identification 
\begin{eqnarray*}
\R^{1,3}&\simeq&\{ix_01+x_1I+x_2J+x_3K,\ (x_0,x_1,x_2,x_3)\in\R^4\}\\
&\simeq&\{\xi\in\HC:\ \xi=-\widehat{\overline{\xi}}\},
\end{eqnarray*}
 where, if $\xi=q_01+q_1I+q_2J+q_3K$ belongs to $\HC,$ we denote  $\overline{\xi}=q_01-q_1I-q_2J-q_3K,$ we get the double cover
\begin{eqnarray}
\Phi:Spin(1,3)&\stackrel{2:1}{\longrightarrow}& SO(1,3)\label{double cover}\\
q&\mapsto &(\xi\in\R^{1,3}\mapsto q\ \xi\ \widehat{q}^{-1}\in\R^{1,3}).\nonumber
\end{eqnarray}
Here and below $SO(1,3)$ stands for the component of the identity of the group of Lorentz transformations of $\R^{1,3}.$ 

\section{Complex angle between two oriented spacelike planes}\label{section complex angle}
We introduce here the complex angle between two oriented spacelike planes in $\R^{1,3}.$ The definition relies on a model of the Grassmannian of the oriented spacelike planes in $\R^{1,3}$ introduced in \cite{Bay}: it is naturally a complex 2-sphere in $\C^3,$ and the intuition in Euclidean space $\R^3$ will then easily lead to the definition. Throughout the paper we will assume that $\R^{1,3}$ is oriented by its canonical basis $(e_0^o,e_1^o,e_2^o,e_3^o),$ and that it is also oriented in time by $e_0^o$: we will say that a timelike vector is \emph{future-oriented} if its first component in the canonical basis is positive.
\subsection{The Grassmannian of the oriented spacelike 2-planes in $\R^{1,3}$}\label{section model grassm}
We consider $\Lambda^2\R^{1,3},$ the vector space of bivectors of $\R^{1,3}$ endowed with its natural metric $\langle.,.\rangle$ (which has signature (3,3)). The Grassmannian of the oriented spacelike 2-planes in $\R^{1,3}$ identifies with the submanifold of unit and simple bivectors
$$\mathcal{Q}=\{\eta\in\Lambda^2\R^{1,3}:\ \langle \eta,\eta\rangle=1,\ \eta\wedge\eta=0\}.$$
The Hodge $*$ operator $\Lambda^2\R^{1,3}\rightarrow \Lambda^2\R^{1,3}$ is defined by the relation
\begin{equation}\label{i lambda2}
\langle *\eta,\eta'\rangle=\eta\wedge\eta'
\end{equation}
for all $\eta,\eta'\in \Lambda^2\R^{1,3},$ where we identify $\Lambda^4\R^{1,3}$ to $\R$ using the canonical volume element $e_0^o\wedge e_1^o\wedge e_2^o\wedge e_3^o$ on $\R^{1,3}.$ It satisfies $*^2=-id_{\Lambda^2\R^{1,3}}$ and thus $i:=-*$ defines a complex structure on $\Lambda^2\R^{1,3}.$ We also define
\begin{equation}\label{H lambda2}
H(\eta,\eta')=\langle \eta,\eta'\rangle-i\ \eta\wedge\eta'\hspace{.5cm}\in\hspace{.5cm}\C
\end{equation}
for all $\eta,\eta'\in\ \Lambda^2\R^{1,3}.$ This is a $\C$-bilinear map on $\Lambda^2\R^{1,3},$ and we have
$$\mathcal{Q}=\{\eta\in\Lambda^2\R^{1,3}:\ H(\eta,\eta)=1\}.$$
The bivectors
$$E_1=e_2^o\wedge e_3^o,\ E_2=e_3^o\wedge e_1^o,\ E_3=e_1^o\wedge e_2^o$$
form a basis of $\Lambda^2\R^{1,3}$ as a vector space over $\C;$ this basis is such that $H(E_i,E_j)=\delta_{ij}$ for all $i,j.$ Since the Clifford map (\ref{Clifford map}) identifies the vectors of $\R^{1,3}$ with some elements of $\HC(2),$ we can use this map and the usual matrix product to identify $\Lambda^2\R^{1,3}$ with elements of $\HC(2).$ Recalling (\ref{identification Cl0}), these even elements may in turn be identified with elements of $\HC$. With these identifications, we easily get
$$E_1=I,\ E_2=J,\ E_3=K$$
and
$$\Lambda^2\R^{1,3}=\{Z_1I+Z_2J+Z_3K\in\HC:(Z_1,Z_2,Z_3)\in\C^3\};$$
moreover, the complex structure $i$ and the quadratic map $H$ defined above on $\Lambda^2\R^{1,3}$ coincide with the natural complex structure $i$ and the quadratic map $H$ defined on $\HC,$ and
\begin{equation}\label{Q S2}
\mathcal{Q}=\{Z_1I+Z_2J+Z_3K:\ Z_1^2+Z_2^2+Z_3^2=1\}\ =\ Spin(1,3)\ \cap\ \Im m\ \HC,
\end{equation}
where $\Im m\ \HC$ stands for the linear space generated by $I,$ $J$ and $K$ in $\HC.$ Let us finally note that $Spin(1,3)$ and $\mathcal{Q}$ identify respectively to the complex spheres 
$$\mathbb{S}^3_{\C}:=\{Z_01+Z_1I+Z_2J+Z_3K:\ Z_0^2+Z_1^2+Z_2^2+Z_3^2=1\}$$ 
and 
$$\mathbb{S}^2_{\C}:=\{Z_1I+Z_2J+Z_3K:\ Z_1^2+Z_2^2+Z_3^2=1\}$$ 
(by (\ref{spin13 S3}) and (\ref{Q S2})); we will consider below the bundle
\begin{eqnarray}
\mathbb{S}^3_{\C}&\rightarrow& \mathbb{S}^2_{\C}\label{bundle S3 S2}\\
q&\mapsto& q^{-1}Iq.\nonumber
\end{eqnarray}
This is a principal bundle of group 
$$\mathbb{S}^1_{\C}=\{\cos A\ 1+\sin A\ I,\ A\in\C\}\ \subset \mathbb{S}^3_{\C}$$
acting on $\mathbb{S}^3_{\C}$ by multiplication on the left; it is equipped with a natural horizontal distribution: a complex curve $g:\C\rightarrow\ \mathbb{S}^3_{\C}$ will be said to be \textit{horizontal} if $g'g^{-1}$ belongs to $\C J\oplus\C K.$

\subsection{Definition of the complex angle}\label{section def angle}
\begin{defn}\label{def complex angle}
Let $p,q\in\mathcal{Q}$ be two oriented spacelike planes of $\R^{1,3}.$ The complex angle between $p$ and $q$ is the complex number $\psi\in\C$ such that
$$H(p,q)=\cos\psi.$$
It is uniquely defined up to sign and the addition of $2\pi\Z$ since $\cos:\C\rightarrow\C$ is surjective and $\cos\psi=\cos\psi'$ if and only if $\psi=\pm\psi'+2k\pi,$ $k\in\Z.$
\end{defn}
The complex angle between $p$ and $q\in\mathcal{Q}$ is a kind of complex arc-length between these two points in the complex sphere $\mathcal{Q}=S^2_{\C}:$
\begin{prop}\label{prop q p psi}
Let $p,q\in\mathcal{Q}$ be two oriented spacelike planes of $\R^{1,3}$ and $\psi\in\C$ be the complex angle between $p$ and $q.$ The following holds:
\\
\\a- If $\psi\neq 0\ [\pi],$ there exists $V\in T_{p}\mathcal{Q}$ such that $H(V,V)=1$ and
\begin{equation}\label{q p v}
q=\cos\psi\ p+\sin\psi\ V.
\end{equation}
b-  If $\psi=0\ [\pi],$ there exists $\xi\in T_{p}\mathcal{Q}$ such that $H(\xi,\xi)=0$ and
\begin{equation}\label{q p xi}
q=\pm p+\xi
\end{equation}
where the sign is positive if $\psi=0\ [2\pi]$ and negative if $\psi=\pi\ [2\pi].$
\end{prop}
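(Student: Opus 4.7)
The plan is to use the $\C$-bilinear form $H$ on $\Lambda^2\R^{1,3}\simeq\C^3$ (via $E_1=I, E_2=J, E_3=K$), which restricts to a non-degenerate bilinear form, and to exploit the fact that $\mathcal{Q}=\mathbb{S}^2_{\C}$ is the complex quadric $H(\eta,\eta)=1$. Differentiating this defining equation at $p\in\mathcal{Q}$ gives
\begin{equation*}
T_p\mathcal{Q}=\{V\in\Lambda^2\R^{1,3}\ :\ H(p,V)=0\},
\end{equation*}
and since $H(p,p)=1\neq 0$, the direct sum decomposition $\Lambda^2\R^{1,3}=\C\,p\oplus T_p\mathcal{Q}$ holds. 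I would therefore begin by writing the unique decomposition
\begin{equation*}
q=\lambda\, p+W,\qquad W\in T_p\mathcal{Q},
\end{equation*}
and computing $\lambda$ and $H(W,W)$ in terms of $\psi$.

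Taking $H(p,\cdot)$ of both sides and using $H(p,p)=1$, $H(p,W)=0$ gives $\lambda=H(p,q)=\cos\psi$. Taking $H(\cdot,\cdot)$ of $q$ with itself and using $H(q,q)=1$ gives
\begin{equation*}
1=\cos^2\psi+H(W,W),\qquad\text{so }H(W,W)=\sin^2\psi.
\end{equation*}

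For case (a), $\psi\neq 0\ [\pi]$ means $\sin\psi\neq 0$, so setting $V:=W/\sin\psi\in T_p\mathcal{Q}$ yields $H(V,V)=1$ and $q=\cos\psi\,p+\sin\psi\,V$, as required. For case (b), $\psi=0\ [\pi]$ gives $\cos\psi=\pm 1$ with the sign $+$ when $\psi=0\ [2\pi]$ and $-$ when $\psi=\pi\ [2\pi]$, and $H(W,W)=\sin^2\psi=0$, so $\xi:=W\in T_p\mathcal{Q}$ is the desired null element.

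There is essentially no obstacle here: the only conceptual point is recognizing that, because $H$ is $\C$-bilinear and non-degenerate on $\Lambda^2\R^{1,3}$, the standard Euclidean-style orthogonal decomposition on a sphere works verbatim for the complex quadric $\mathbb{S}^2_{\C}$. The degenerate case (b) is exactly where $W$ becomes a null vector of $H|_{T_p\mathcal{Q}}$, which reflects geometrically that $\psi=0\ [\pi]$ corresponds to the ``cut locus-like'' situation on $\mathbb{S}^2_{\C}$ where no unit tangent direction can be normalized from $W$.
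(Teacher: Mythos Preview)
Your proposal is correct and follows essentially the same approach as the paper: both project $q$ onto the orthogonal decomposition $\C\,p\oplus T_p\mathcal{Q}$, identify the $\C\,p$-component as $\cos\psi\,p$, and then normalize the tangential part. Your version is slightly more direct in that you compute $H(W,W)=\sin^2\psi$ immediately and divide by $\sin\psi$, whereas the paper first extracts a square root $\gamma$ of $H(\xi,\xi)$ and only afterwards identifies $\gamma=\pm\sin\psi$ (adjusting the sign of $V$ if needed); the content is the same.
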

\begin{proof}
Since $H(p,q)=\cos\psi$ and $H(p,p)=1,$  $\xi:=q-\cos\psi\ p$ is such that
$$H(p,\xi)=H(p,q)-\cos\psi\ H(p,p)=0$$
i.e. $\xi$ belongs to 
$$T_{p}\mathcal{Q}=\{\xi\in \mathcal{I}m\ \HC:\ H(p,\xi)=0\}.$$
There are two cases:
\\\textit{Case 1:} $H(\xi,\xi)\neq 0,$ which is equivalent to 
$$H(q,q)+\cos^2\psi\ H(p,p)-2\cos\psi\ H(p,q)\neq 0,$$
which reads $\cos^2\psi\neq 1$ since $H(p,p)=H(q,q)=1$ and $H(p,q)=\cos\psi,$ i.e. $\psi\neq 0\ [\pi];$ if $\gamma\in\C$ is such that $\gamma^2=H(\xi,\xi)$ then $\gamma\neq 0$ and setting $V=\xi/\gamma$ we obtain $H(V,V)=1$ and $q=\cos\psi\ p+\gamma\ V.$ Since $H(p,p)=H(q,q)=1$ and $H(p,V)=0$ we deduce that $1=\cos^2\psi+\gamma^2,$ i.e. $\gamma=\pm\sin\psi.$ Changing $V$ by $-V$ if necessary, we may suppose that $\gamma=\sin\psi,$ and (\ref{q p v}) holds.
\\\textit{Case 2:} $H(\xi,\xi)=0,$ which is equivalent to $\psi=0\ [\pi];$ we thus obtain $\xi=q-\epsilon p$ with $\epsilon=1$ if $\psi=0\ [2\pi]$ and $\epsilon=-1$ if $\psi=\pi\ [2\pi],$ and (\ref{q p xi}) follows.
\end{proof}
\begin{rem}\label{rem psi degenerate hyperplane}
The case $\psi=0\ [\pi]$ has the following geometric meaning: by (\ref{q p xi}), this means that $q=\pm p+\xi$ for some $\xi\in T_p\mathcal{Q}$ with $H(\xi,\xi)=0,$ which holds if and only if $p$ and $q$ belong to some degenerate hyperplane of $\R^{1,3}:$ by (\ref{H lambda2}), $\xi\in T_p\mathcal{Q}$ and $H(\xi,\xi)=0$ mean that $\xi=u\wedge N$ for a unit vector $u\in p$ and a null vector $N$ normal to $p,$ and the degenerate hyperplane containing $p$ and $q$ is $p\oplus\R N.$ Note that there are exactly two degenerate hyperplanes containing a given spacelike 2-plane $p:$ the hyperplanes $p\oplus L$ and $p\oplus L'$ where $L$ and $L'$ are the two null lines in $\R^{1,3}$ which are normal to $p.$ 
\end{rem}
\subsection{Interpretation of the complex angle in terms of two real angles}
Let us consider two oriented spacelike planes $p,q\in\mathcal{Q}$ and the complex angle $\psi$ between $p$ and $q,$ and assume that $\psi\neq 0\ [\pi].$ By Proposition \ref{prop q p psi}, there exists $V\in T_{p}\mathcal{Q}$ such that $H(V,V)=1$ and
$$q=\cos\psi\ p+\sin\psi\ V.$$
By (\ref{H lambda2}) the conditions $V\in T_p\mathcal{Q}$ and $H(V,V)=1$ mean that $V$ is of the form $u\wedge v$ for some unit spacelike vectors $u$ and $v$ belonging to $p$ and $p^{\perp},$ i.e. such that $p=u\wedge u^{\perp}$ and  $p^{\perp}=v\wedge v^{\perp}$ (where $p^{\perp}$ is the timelike plane orthogonal to $p,$ with its natural orientation, and $u^{\perp}$ and $v^{\perp}$ are respectively unit spacelike and unit timelike vectors). Writing $\psi=\psi_1+i\psi_2$ and using the formulas
\begin{eqnarray*}
\cos\psi&=&\cos\psi_1\cosh\psi_2-i\sin\psi_1\sinh\psi_2\\
\sin\psi&=&\sin\psi_1\cosh\psi_2+i\cos\psi_1\sinh\psi_2,
\end{eqnarray*}
a direct computation yields
\begin{eqnarray}
q&=&\cos\psi\ u\wedge u^{\perp}+\sin\psi\ u\wedge v\nonumber\\
&=&(\cosh\psi_2\ u+\sinh\psi_2\ v^{\perp})\wedge (\cos\psi_1\ u^{\perp}-\sin\psi_1\ v);\label{formule q uv}
\end{eqnarray}
the plane $q$ is generated by the unit and orthogonal vectors $\cos\psi_1\ u^{\perp}-\sin\psi_1\ v$ and $\cosh\psi_2\ u+\sinh\psi_2\ v^{\perp};$ these vectors are determined by an euclidean rotation of angle $\psi_1$  in the spacelike plane generated by $u^{\perp}$ and $v,$ and by a lorentzian rotation of angle $\psi_2$ in the timelike plane generated by $u$ and $v^{\perp}.$
\begin{rem}\label{rem real imaginary angles}
We deduce that the angle $\psi\neq 0\ [\pi]$ between the planes $p$ and $q$ is a real number (i.e. $\psi_2=0$) if and only if there exists a spacelike hyperplane containing both $p$ and $q$ (by (\ref{formule q uv}) this is the hyperplane $p\oplus\R v$) and that $\psi$ is a pure imaginary complex number (i.e. $\psi_1=0$) if and only there exists a timelike hyperplane containing $p$ and $q$ (this is the hyperplane $p\oplus\R v^{\perp}$). See also Proposition \ref{prop pos p p0} in Appendix \ref{app angle}.
\end{rem}
\section{First properties of a surface with constant angle}\label{section first properties}
\subsection{The Gauss map of a spacelike surface} \label{subsection gauss map}
Let us consider an oriented spacelike surface $M$ in $\R^{1.3}.$ We identify the oriented Gauss map of $M$ with the map
$$G:\ M\rightarrow \mathcal{Q},\ x\mapsto G(x)=u_1\wedge u_2,$$
where $(u_1,u_2)$ is a positively oriented orthonormal basis of $T_xM.$ We define the \textit{vectorial product} of two vectors $\xi,\xi'\in\Im m\ \HC$ by
$$\xi \times \xi':=\frac{1}{2}\left(\xi\xi'-\xi'\xi\right)\ \in\ \Im m\ \HC.$$ 
We also define the \textit{mixed product} of three vectors $\xi,\xi',\xi''\in \Im m\ \HC$ by
$$[\xi,\xi',\xi'']:= H(\xi \times \xi',\xi'')\ \in\ \C.$$
The mixed product is a \textit{complex volume form} on $\Im m\ \HC$ (i.e. with complex values, $\C$-linear and skew-symmetric with respect to the three arguments); it induces a natural \textit{complex area form} $\omega_{\mathcal{Q}}$ on $\mathcal{Q}$ by
$${\omega_{\mathcal{Q}}}_p(\xi,\xi'):=[\xi,\xi',p]$$ 
for all $p\in\mathcal{Q}$ and all $\xi,\xi'\in T_p\mathcal{Q}.$ Note that ${\omega_{\mathcal{Q}}}_p(\xi,\xi')=0$ if and only if $\xi$ and $\xi'$ are linearly dependent over $\C.$ We now recall the following expression for the pull-back by the Gauss map of the area form $\omega_Q:$
\begin{prop}\cite{Bay}\label{prop pull back} 
If $K$ and $K_N$ denote the Gauss and the normal curvatures of $M$ in $\R^{1,3},$ we have
\begin{equation}\label{formula pull-back omega}
G^*\omega_{\mathcal{Q}}=(K+iK_N)\ \omega_M,
\end{equation}
where $\omega_M$ is the form of area of $M.$ In particular, $K=K_N=0$ at $x_o\in M$ if and only if the linear space $dG_{x_o}(T_{x_o}M)$ belongs to some complex line in $T_{G(x_o)}\mathcal{Q}.$
\end{prop}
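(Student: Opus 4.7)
Since (\ref{formula pull-back omega}) is pointwise, it suffices to fix $x_o\in M$, pick a positively oriented orthonormal basis $(e_1,e_2)$ of $T_{x_o}M$, and check that $\omega_{\mathcal Q}(dG_{x_o}(e_1),dG_{x_o}(e_2))$ equals $K(x_o)+iK_N(x_o)$. First I would extend $(e_1,e_2)$ to an adapted orthonormal frame $(e_1,e_2,n_3,n_4)$ along $M$ near $x_o$, with $n_3$ timelike and $n_4$ spacelike in the Lorentzian normal bundle. Because $e_1\wedge e_2$ is $SO(2)$-invariant, the tangential parts of $\nabla^{\R^{1,3}}e_i$ cancel when one differentiates $G=e_1\wedge e_2$, yielding
\[
dG(v)=B(v,e_1)\wedge e_2 + e_1\wedge B(v,e_2),
\]
with $B$ the second fundamental form; in particular $dG(v)\in T_{G(x_o)}\mathcal Q$ since only normal components remain.

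Next I would transfer the computation to the complex-quaternion model of Section \ref{section clifford}. By Lorentz equivariance I may assume that at $x_o$ the adapted frame is $(e_1^o,e_2^o,e_0^o,e_3^o)$, so that $G(x_o)=K$ and $T_{G(x_o)}\mathcal Q=\C I\oplus \C J$. Under the identifications fixed in Section \ref{section clifford} (notably $e_0^o\wedge e_k^o\leftrightarrow iE_k$ via the Hodge $*$), each bivector $dG(e_i)$ takes the form $\alpha_i I+\beta_i J$ with $\alpha_i,\beta_i\in\C$ explicit linear expressions in the components $h^\alpha_{ij}:=\langle B(e_i,e_j),n_\alpha\rangle$. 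Since $I\times J=K$ gives $\omega_{\mathcal Q}(I,J)=H(K,K)=1$, one gets $G^*\omega_{\mathcal Q}(e_1,e_2)=\alpha_1\beta_2-\alpha_2\beta_1$.

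The final step is to expand this $2\times 2$ $\C$-determinant and interpret its two parts. The real part collects to $-\det h^3+\det h^4$, which by the Gauss equation for a spacelike surface in $\R^{1,3}$ (the minus sign coming from $\langle n_3,n_3\rangle=-1$) is exactly $K$; the imaginary part collects to $\langle[A_{n_3},A_{n_4}]e_1,e_2\rangle$, which by the Ricci equation combined with the flatness of $\R^{1,3}$ is exactly $K_N=\langle R^\perp(e_1,e_2)n_3,n_4\rangle$. This proves (\ref{formula pull-back omega}).

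The ``in particular'' assertion then becomes essentially formal: on a 2-dimensional complex vector space, a non-zero alternating $\C$-bilinear form vanishes on an ordered pair of vectors if and only if the pair is $\C$-linearly dependent. Applied to $T_{G(x_o)}\mathcal Q$ and $\omega_{\mathcal Q}$, this reads $K(x_o)+iK_N(x_o)=0$ if and only if $dG_{x_o}(e_1)$ and $dG_{x_o}(e_2)$ are $\C$-linearly dependent, i.e.\ $dG_{x_o}(T_{x_o}M)$ is contained in a complex line of $T_{G(x_o)}\mathcal Q$. The main obstacle throughout the argument is the consistent bookkeeping of sign conventions---Hodge star, the timelike sign of $n_3$, the orientation of the normal frame, and the sign in the definition of $K_N$---which must all be matched so that the real and imaginary parts land on $+K$ and $+K_N$ without spurious signs.
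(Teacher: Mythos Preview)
The paper does not actually prove this proposition: it is quoted from \cite{Bay} and stated without proof, so there is no ``paper's own proof'' to compare against. Your proposal is therefore to be judged on its own merits.

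Your plan is the standard direct computation and is correct in outline. The key ingredients---the formula $dG(v)=B(v,e_1)\wedge e_2+e_1\wedge B(v,e_2)$, the reduction by Lorentz equivariance to the canonical frame so that $G(x_o)=K$ and $T_{G(x_o)}\mathcal Q=\C I\oplus\C J$, and the identification of the real and imaginary parts of the resulting $2\times 2$ complex determinant with $K$ and $K_N$ via the Gauss and Ricci equations---are exactly the right ones, and the ``in particular'' clause indeed follows formally from the nondegeneracy of $\omega_{\mathcal Q}$ on a $2$-dimensional complex space. Your honest caveat about sign bookkeeping is well placed: in carrying this out you must be careful that (i) the identification $e_0^o\wedge e_k^o\leftrightarrow iE_k$ is computed from the paper's convention $i:=-*$ and its specific definition of $*$ via $\langle *\eta,\eta'\rangle=\eta\wedge\eta'$, (ii) the orientation of the normal frame $(n_3,n_4)$ is chosen consistently with the paper's orientation conventions (the paper later uses $(N_2,N_1)$ with $N_2$ timelike and future-directed as the positively oriented normal frame), and (iii) the sign convention for $K_N$ matches the Ricci-equation term you obtain. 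None of these is a conceptual obstacle, only a matter of care; the argument goes through once they are fixed.
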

As a consequence of the proposition, if $K=K_N=0$ and if $G:M\rightarrow\mathcal{Q}$ is a regular map (i.e. if $dG_x$ is injective at every point $x$ of $M$), there is a unique complex structure $\mathcal{J}$ on $M$ such that
$$dG_x(\mathcal{J}u)=i\ dG_x(u)$$
for all $x\in M$ and all $u\in T_xM.$ Indeed, for all $x\in M,$ ${G^*\omega_{\mathcal{Q}}}_x=\omega_{\mathcal{Q}}(dG_x,dG_x)=0$ in that case, and $Im(dG_x)$ is a complex line in $T_{G(x)}\mathcal{Q}$ and we may set 
$$\mathcal{J}u:={dG_x}^{-1}(i\ dG_x(u))$$
for all $u\in T_xM.$ The complex structure $\mathcal{J}$ coincides with the complex structure introduced in \cite{GMM}. Note that $M$ cannot be compact under these hypotheses, since, on the Riemann surface $(M,\mathcal{J}),$ the Gauss map $G=G_1I+G_2J+G_3K$ is globally defined, non-constant, and such that  $G_1,G_2$ and $G_3$ are holomorphic functions. Thus, assuming moreover that $M$ is simply connected, by the uniformization theorem $(M,\mathcal{J})$ is conformal to an open set of $\C,$ and thus admits a globally defined conformal parameter $z=x+iy.$ 
\subsection{Definition of a constant angle surface}
\begin{defn}
An oriented spacelike surface $M$ is of constant angle with respect to a spacelike plane $p_o$ if the angle function $\psi$ between $p_o$ and the tangent planes of $M$ is constant. 
\end{defn}
By Proposition \ref{prop q p psi} and Remark \ref{rem psi degenerate hyperplane}, if $\psi=0\ [\pi]$ then the surface $M,$ if it is connected, belongs to a degenerate hyperplane $p_o\oplus L$ where $L$ is one of the two null lines which are normal to $p_o,$ and nothing else can be said since reciprocally an arbitrary surface in $p_o\oplus L$ has constant angle $\psi=0\ [\pi].$ Details are given in the Appendix \ref{app case psi 0 mod pi}. Assuming thus that $\psi\neq 0\ [\pi],$ this alternatively means that the Gauss map image of $M$ belongs to the complex circle of center $p_o$ and constant radius $\cos\psi$ in $\mathcal{Q},$ i.e. $G$ is of the form
\begin{equation}\label{general circle}
G=\cos\psi\ p_o+\sin\psi\ V
\end{equation}
for some function $V:M\rightarrow T_{p_o}\mathcal{Q}$ such that $H(V,V)=1$ (Proposition \ref{prop q p psi}). Since the Gauss map image of a constant angle surface in $\R^{1,3}$ is thus by definition a complex curve, it is clear from (\ref{formula pull-back omega}) that such a surface has vanishing Gauss and normal curvatures $K=K_N=0.$ It is known that constant angles surfaces in $\R^4$ have the same property \cite{BdSOR}.
\subsection{First examples: constant angle surfaces in a hyperplane}
If $M$ is a constant angle surface of $\R^{1,3},$ of angle $\psi\in\C,$ it is clear from Remark \ref{rem real imaginary angles} that if $M$ belongs to a spacelike (resp. timelike) hyperplane of $\R^{1,3}$ then $\psi$ is a real (resp. pure imaginary) number. Since constant angle surfaces in $\R^3$ and in $\R^{1,2}$ were studied in \cite{DRH,RH} and \cite{LM}, we will be merely interested in the following in surfaces which do not belong to spacelike or timelike hyperplanes. We will see below examples of constant angle surfaces belonging to degenerate hyperplanes of $\R^{1,3}$ (the angle is $\psi=0\ [\pi]$), and, in contrast with the case $\psi=0\ [\pi],$ examples of real or pure imaginary constant angle surfaces which do not belong to any hyperplanes of $\R^{1,3}.$
\subsection{A new example}\label{section new example}
Let us verify that the immersion
\begin{equation}\label{F new example}
F(x,y) = e^{ax-by}\left( \cosh(x),\sinh(x),\cos(y),\sin(y) \right),\hspace{.5cm} (x,y)\in\R^2
\end{equation}
defines a spacelike surface with constant angle. The associated tangent basis is
\[
\begin{array}{rl}
\partial_xF &= e^{ax-by}\left( a \cosh(x)+\sinh(x),\cosh(x)+a\sinh(x),a\cos(y),a\sin(y) \right) ,
\\[12pt]
\partial_yF &= -e^{ax-by}\left( b \cosh(x),b\sinh(x),b\cos(y)+\sin(y),-\cos(y)+b\sin(y) \right)
\end{array}
\]
and the first fundamental form is
$$e^{2(a x-by)}(dx^2+dy^2).$$
The immersion is thus spacelike, and $z=x+iy$ is a conformal parameter. By a direct computation, its Gauss map reads
\begin{eqnarray*}
G(z)&:=&\frac{\partial_xF\wedge \partial_yF}{|\partial_xF\wedge \partial_yF|}\\
&=&(a+ib)E_1-(\cosh z+(a+ib)\sinh z)E_2+i((a+ib)\cosh z+\sinh z)E_3.
\end{eqnarray*}
Thus, the complex angle $\psi$ between the tangent plane and the plane $p_o:=E_1$ satisfies
$$\cos\psi:=H(G(z),E_1)=a+ib;$$
it is constant. Note that $|F|^2=0,$ which means that the surface belongs to the lightcone at $0.$ We may also verify by a direct computation that the mean curvature vector $\vec{H}$ of the immersion is lightlike i.e. satisfies $|\vec{H}|^2=0$. Moreover, it is not difficult to see that the surface belongs to a hyperplane if and only if $a=\pm 1$ and $b=0,$ i.e. $\psi=0\ [\pi];$ in that case it belongs in fact to the degenerate hyperplane $x_0\pm x_1=1.$ This in accordance with Remark \ref{rem psi degenerate hyperplane}. Writing $\psi=\psi_1+i\psi_2,$ $\psi_1,\psi_2\in\R,$ and since
$$a=\cos\psi_1\cosh\psi_2\hspace{.5cm}\mbox{and}\hspace{.5cm}b=-\sin\psi_1\sinh\psi_2,$$
we obtain for $a\in(-1,1)$ (resp. $a\in \R\backslash[-1,1]$) and $b=0$ a surface with real (resp. pure imaginary) constant angle which does not belong to any hyperplane. Let us finally note that this example shows that there exist constant angle surfaces for arbitrary values of the angle $\psi.$ We will explain below how to systematically construct constant angle surfaces in $\R^{1,3}.$ 

\section{Representation of a surface with constant angle}\label{section representation 1}
In the sequel we will consider a complex circle in $\mathcal{Q}=\mathbb{S}^2_{\C},$ with center $I$ and radius $\cos\psi,$ $\psi\neq 0\ [\pi],$ and parametrized by
\begin{equation}\label{G function z}
G(z)=\cos\psi\ I+\sin\psi\ J\left\{\cos\left(\frac{2z}{\sin\psi}\right)+\sin \left(\frac{2z}{\sin\psi}\right)I\right\},\ z\in \C.
\end{equation}
It is of the form (\ref{general circle}) and such that $H(G'(z),G'(z))=4$ for all $z\in\C.$ Recall the principal bundle $S^3_{\C}\rightarrow S^2_{\C}$ introduced in (\ref{bundle S3 S2}) and its natural horizontal distribution. Direct computations show the following:
\begin{lem}\label{lift G}
The function $g:\C\rightarrow \mathbb{S}^3_{\C}$ defined by 
\begin{eqnarray}
g(z)&=&-\cos\frac{\psi}{2}\ \sin\left(z\tan\frac{\psi}{2}\right)1+\cos\frac{\psi}{2}\ \cos\left(z\tan\frac{\psi}{2}\right)I\label{explicit g}\\
&&+\sin\frac{\psi}{2}\ \cos\left(z\cot\frac{\psi}{2}\right)J-\sin\frac{\psi}{2}\ \sin\left(z\cot\frac{\psi}{2}\right)K\nonumber
\end{eqnarray}
is an horizontal lift of the function $G$ defined in (\ref{G function z}); it is such that $H(g',g')=1$ and satisfies 
\begin{equation}\label{relation g beta}
g'g^{-1}=\cos\beta J +\sin\beta K
\end{equation}
with
\begin{equation}\label{explicit beta}
\beta=-2z \cot\psi.
\end{equation}
Moreover, an horizontal lift of $G$ is necessarily of the form $g_a:=ag$ for some constant $a=\cos(A)1+\sin(A)I,$ $A\in\C,$ and satisfies
$$g_{a}'g_{a}^{-1}=\cos\beta_a J +\sin\beta_a K$$
with $\beta_a=\beta+2A.$
\end{lem}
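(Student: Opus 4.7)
My plan is to verify the four assertions by direct computation in the complex quaternions $\HC$, exploiting the fundamental identity $q\bar q=H(q,q)\cdot 1$, where $\bar q=q_0-q_1I-q_2J-q_3K$ is the complex-quaternion conjugate. In particular, since $g(z)\in \mathbb{S}^3_{\C}=Spin(1,3)$, we have $g^{-1}=\bar g$, which makes all inverses explicit. The key trigonometric identities I will repeatedly use are
\[
\tan\tfrac{\psi}{2}+\cot\tfrac{\psi}{2}=\frac{2}{\sin\psi}\qquad\text{and}\qquad \cot\tfrac{\psi}{2}-\tan\tfrac{\psi}{2}=2\cot\psi,
\]
which explain why the two distinct arguments $z\tan\tfrac{\psi}{2}$ and $z\cot\tfrac{\psi}{2}$ appearing in $g$ recombine (through addition/subtraction formulas) into the arguments $\tfrac{2z}{\sin\psi}$ in $G$ and $-2z\cot\psi$ in $\beta$.

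First I would check that $g$ is a lift of $G$, i.e.\ that $\bar g\, I\, g=G$. Expanding the product using $I^2=J^2=K^2=-1$, $IJ=K$ etc., and grouping the terms according to the combinations $z(\tan\tfrac{\psi}{2}\pm\cot\tfrac{\psi}{2})$, the sum and product-to-sum formulas yield exactly $\cos\psi\,I+\sin\psi\,J\{\cos(\tfrac{2z}{\sin\psi})+\sin(\tfrac{2z}{\sin\psi})I\}$, which is $G(z)$. Then a direct differentiation of $g$ and multiplication by $\bar g$ produces $g'\bar g$; after simplification, using $\cot\tfrac{\psi}{2}-\tan\tfrac{\psi}{2}=2\cot\psi$, one recognizes $\cos\beta\,J+\sin\beta\,K$ with $\beta=-2z\cot\psi$. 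This simultaneously establishes (\ref{relation g beta}), (\ref{explicit beta}), and the horizontality of $g$.

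The identity $H(g',g')=1$ then follows for free from the multiplicativity of $H$ on $\HC$: the computation $(pq)\,\overline{(pq)}=pq\bar q\bar p=H(q,q)p\bar p=H(p,p)H(q,q)$ shows that $H(pq,pq)=H(p,p)H(q,q)$ for all $p,q\in\HC$. Applying this to the factorization $g'=(g'g^{-1})\,g$ gives $H(g',g')=(\cos^2\beta+\sin^2\beta)\,H(g,g)=1$.

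For the last assertion, any other lift $g_a$ of $G$ must be of the form $g_a=a\,g$ for some $a\colon \C\to \mathbb{S}^1_\C$ (the structure group of the bundle, acting by left multiplication), so $a(z)=\cos A(z)\,1+\sin A(z)\,I$ for some function $A\colon \C\to\C$. Computing
\[
g_a'\,g_a^{-1}=a'a^{-1}+a\,(g'g^{-1})\,a^{-1},
\]
a short calculation gives $a'a^{-1}=A'(z)\,I$, which is vertical, and conjugation by $a$ rotates the plane $\C J\oplus \C K$ by angle $2A$, namely $aJa^{-1}=\cos(2A)\,J+\sin(2A)\,K$ and $aKa^{-1}=-\sin(2A)\,J+\cos(2A)\,K$. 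Horizontality of $g_a$ thus forces $A'\equiv 0$, so $A$ is a constant, and the addition formulas yield $g_a'g_a^{-1}=\cos(\beta+2A)\,J+\sin(\beta+2A)\,K$, i.e.\ $\beta_a=\beta+2A$. The only real obstacle is the bookkeeping of the complex-quaternion products and the careful tracking of the two trigonometric arguments $z\tan\tfrac{\psi}{2}$ and $z\cot\tfrac{\psi}{2}$; there is no conceptual difficulty once the multiplicativity of $H$ and the identities above are in hand.
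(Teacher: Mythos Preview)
Your proposal is correct and is precisely the approach the paper intends: the paper's entire proof is the single sentence ``Direct computations show the following,'' and your outline is a faithful, well-organized execution of those computations. Your use of the multiplicativity $H(pq,pq)=H(p,p)H(q,q)$ to obtain $H(g',g')=1$ from (\ref{relation g beta}) rather than by a separate computation is a mild but legitimate shortcut.
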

We now write the representation theorem for the flat surfaces with flat normal bundle and regular Gauss map in $\R^{1,3}$ (Corollary 5 in \cite{Bay}, after \cite{GMM}) in the context of the prescribed Gauss map in the form (\ref{G function z}). In the statement, the function $g:\C\rightarrow\mathbb{S}^3_{\C}$ is an horizontal lift of $G$ and $\beta:\C\rightarrow\C$ is such that (\ref{relation g beta})-(\ref{explicit beta}) hold.
\begin{thm}\label{thm representation}
Let $\mathcal{U}\subset\C$ be a simply connected open set. If $h_0,h_1:\mathcal{U}\rightarrow\R$ are two real functions such that the vector fields $\alpha_1,\alpha_2\in\Gamma(T\mathcal{U})$ defined by
\begin{eqnarray*}
\alpha_1&:=&ih_0\cos\beta+h_1\sin\beta\\
\alpha_2&:=&ih_0\sin\beta-h_1\cos\beta
\end{eqnarray*}
are linearly independent at every point of $\mathcal{U}$ and satisfy $[\alpha_1,\alpha_2]=0$ (as real vector fields), then, setting 
\begin{equation}\label{xi thm}
\xi:=g^{-1} (\omega_1J+\omega_2K)\ \widehat{g}
\end{equation}
where $\omega_1,\omega_2:T\mathcal{U}\rightarrow\R$ are the dual forms of $\alpha_1,\alpha_2\in\Gamma(T\mathcal{U}),$ 
$$F=\int\xi:\ \mathcal{U}\rightarrow\R^{1,3}$$ 
is a spacelike surface with constant angle $\psi.$ Reciprocally, up to a rigid motion of $\R^{1,3},$ a spacelike surface of constant angle $\psi$ and regular Gauss map may be locally written in that form. 
\end{thm}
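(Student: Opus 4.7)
The plan is to deduce both directions from the general representation theorem for spacelike surfaces with regular Gauss map and vanishing Gauss and normal curvatures in $\R^{1,3}$ (Corollary~5 of \cite{Bay}, after \cite{GMM}), specialized to the prescribed Gauss map $(\ref{G function z})$. This applies to our setting since a constant angle surface has Gauss map valued in a complex circle of $\mathcal{Q}$ and hence has $K=K_N=0$ by Proposition \ref{prop pull back}. The cited theorem asserts that, locally and up to a rigid motion of $\R^{1,3}$, any such surface admits the form $dF = g^{-1}(\omega_1J+\omega_2K)\widehat{g}$ with $g$ an horizontal lift of its Gauss map and $\omega_1,\omega_2$ a pair of real $1$-forms making this $\HC$-valued $1$-form both $\R^{1,3}$-valued and closed. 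The content of the present theorem is therefore the translation of this closedness condition into the commutativity $[\alpha_1,\alpha_2]=0$, using the explicit horizontal lift and the explicit $\beta$ provided by Lemma \ref{lift G}.

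For the direct direction, starting from $h_0,h_1$ and forming $\alpha_1,\alpha_2,\omega_1,\omega_2$ as in the statement, the Cartan formula $d\omega_i(\alpha_j,\alpha_k) = \alpha_j(\omega_i(\alpha_k))-\alpha_k(\omega_i(\alpha_j))-\omega_i([\alpha_j,\alpha_k])$ applied to the dual basis, combined with $[\alpha_1,\alpha_2]=0$, immediately yields $d\omega_1=d\omega_2=0$. One then computes $d\xi$ by Leibniz, using $dg=g'\,dz$, $d\widehat g=\widehat{g'}\,d\bar z$, the structure equation $g'g^{-1}=\cos\beta\,J+\sin\beta\,K$, and the quaternionic relations $J^2=K^2=-1,\ JK=-KJ$; the terms arising from $d(g^{-1})$ and $d\widehat g$ recombine via the $(\cos\beta,\sin\beta)$-decomposition built into the definition of $\alpha_1,\alpha_2$, and the output is a multiple of the dual $2$-form of $[\alpha_1,\alpha_2]$, hence vanishes. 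Simple connectedness of $\mathcal{U}$ together with the Poincar\'e lemma then yields $F$ with $dF=\xi$; the linear independence of $\alpha_1,\alpha_2$ forces $F$ to be an immersion, computing $\langle\xi,\xi\rangle$ from $(\ref{xi thm})$ shows it to be spacelike, and its Gauss map is by construction the prescribed $G$, hence of constant complex angle $\psi$ with $p_o=I$.

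For the converse, let $M$ be a spacelike surface of constant angle $\psi\neq 0\,[\pi]$ with regular Gauss map. After a Lorentz transformation of $\R^{1,3}$ moving $p_o$ to $I$, and a holomorphic reparametrization of $M$ with respect to its canonical conformal structure $\mathcal{J}$ (Subsection \ref{subsection gauss map}), its Gauss map takes locally the explicit form $(\ref{G function z})$. The cited representation theorem then produces $dF = \widetilde{g}^{-1}(\widetilde{\omega}_1 J + \widetilde{\omega}_2 K)\widehat{\widetilde{g}}$ for some horizontal lift $\widetilde{g}$ of $G$ and some real $1$-forms $\widetilde{\omega}_1,\widetilde{\omega}_2$. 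By Lemma \ref{lift G}, $\widetilde{g} = ag$ for some constant $a\in\mathbb{S}^1_\C$; absorbing $a$ into the residual rigid motion reduces to $\widetilde{g}=g$. Inverting the linear system defining $\alpha_1,\alpha_2$ uniquely extracts $h_0,h_1$ from the dual vector fields of $\widetilde{\omega}_1,\widetilde{\omega}_2$, and closedness of $\xi$, read backwards through the direct computation, is equivalent to $[\alpha_1,\alpha_2]=0$, while the immersion condition is the required linear independence.

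The main obstacle is the bookkeeping in the closedness computation of the direct direction: one must unfold $d\xi$ in the noncommutative complex quaternion algebra, carefully track the interaction between the holomorphic part of $g$ and the antiholomorphic part of $\widehat{g}$, and verify that the specific ansatz for $\alpha_1,\alpha_2$ in terms of $h_0,h_1,\cos\beta,\sin\beta$ is exactly what makes the numerous terms cancel, leaving only $[\alpha_1,\alpha_2]$. All other steps, including the $\R^{1,3}$-valuedness of $\xi$ (which follows from $\omega_1J+\omega_2K\in\R^{1,3}$ pointwise together with the Spin-action preserving $\R^{1,3}$), are ancillary bookkeeping around this central calculation.
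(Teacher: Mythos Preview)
Your proposal is correct and matches the paper's approach exactly: the paper gives no proof here but simply presents the theorem as the specialization of Corollary~5 in \cite{Bay} (after \cite{GMM}) to the prescribed Gauss map~(\ref{G function z}), precisely as you identify in your opening paragraph. Your elaboration on how $d\xi=0$ reduces to $[\alpha_1,\alpha_2]=0$ via the explicit horizontal lift of Lemma~\ref{lift G} goes beyond what the paper itself provides, but is consistent with the argument in the cited reference.
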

We also recall from \cite{Bay} that the real functions $h_0$ and $h_1$ in the theorem are the components of the mean curvature vector of the surface in a parallel frame normal to the surface, and the complex functions $\alpha_1,\alpha_2$ are the expressions in $z$ of a parallel frame tangent to the surface; moreover, these parallel frames are positively oriented, in space and in time.
\begin{rem}
Suppose that $g:\C\rightarrow \mathbb{S}^3_{\C}$ is an horizontal lift of $G:\C\rightarrow\mathbb{S}^2_{\C}$ as in Lemma \ref{lift G} and that $\beta:\C\rightarrow\C$ is such that (\ref{relation g beta}) holds.  For $a\in \mathbb{S}^3_{\C},$ the function $g_a:=ga:\C\rightarrow \mathbb{S}^3_{\C}$ also satisfies (\ref{relation g beta}), with the same function $\beta$. If $h_0$ and $h_1$ are real functions as in the statement of Theorem \ref{thm representation}, the corresponding forms $\xi_g$ and $\xi_{g_a}$ are linked by $\xi_{g_a}=a^{-1}\xi_g\widehat{a}:$ the immersions $\int \xi_g$ and $\int \xi_{g_a}$ are thus congruent, i.e. differ one from the other by a rigid motion of $\R^{1,3}$ (recall (\ref{double cover})).
\end{rem}

\section{The representation theorem in terms of the metric}\label{section representation 2}

We aim to apply the representation theorem to construct all the constant angle surfaces and give general explicit expressions in some special cases. In order to do this, we reformulate here the representation theorem (Theorem \ref{thm representation}) using the coefficients of the metric instead of the unknown functions $h_0,h_1$: the compatibility condition on these functions will then reduce to a hyperbolic PDE on the metric coefficients, whose Cauchy problem is solvable.

\subsection{Determination of the metric}
Let us keep the notation of the previous section and assume that the hypotheses of Theorem \ref{thm representation} hold. Writing $\beta=u+iv,$ straightforward computations yield
$$\alpha_1=(h_0 \sinh(v)+h_1 \cosh(v))\sin u+i(h_0 \cosh(v)+h_1 \sinh(v))\cos u$$
and 
$$\alpha_2=-(h_0 \sinh(v)+h_1 \cosh(v))\cos u+i(h_0 \cosh(v)+h_1 \sinh(v))\sin u;$$
since $\alpha_1$ and $\alpha_2$ are everywhere independent vectors in $\R^2$ we have
$$(h_0 \sinh(v)+h_1 \cosh(v))(h_0 \cosh(v)+h_1 \sinh(v))\neq 0,$$
and we may set $\mu$ and $\nu$ such that 
\begin{equation}\label{mu nu h0 h1}
\frac{1}{\mu}=h_0 \sinh(v)+h_1 \cosh(v)\hspace{.5cm}\mbox{and}\hspace{.5cm} \frac{1}{\nu}=h_0 \cosh(v)+h_1 \sinh(v)
\end{equation}
and get the formulas
\begin{equation} \label{a1 a2 tangent frame}
\alpha_1=\frac{1}{\mu}\sin(u)+\frac{i}{\nu}\cos(u)\hspace{.5cm}\mbox{and}\hspace{.5cm}\alpha_2=-\frac{1}{\mu}\cos(u)+\frac{i}{\nu}\sin(u).
\end{equation}
Since the tangent frame $(\alpha_1,\alpha_2)$ is supposed to be orthonormal (by (\ref{xi thm}) the metric is $\omega_1^2+\omega_2^2$), the metric reads 
\begin{equation}\label{metric x y}
\mu^2 dx^2+\nu^2 dy^2.
\end{equation} 
We write in the next lemma the condition $[\alpha_1,\alpha_2]=0$ appearing in Theorem \ref{thm representation} in terms of the metric coefficients $\mu,\nu:$

\begin{lem} \label{c1 c2 lemma}
The condition $[\alpha_1,\alpha_2]=0$ reads
\begin{equation} \label{Eq::Condition}
\left\lbrace
\begin{array}{rl}
\dfrac{1}{\nu}\ \partial_y \mu&=-c_1 \\[12pt]
\dfrac{1}{\mu}\ \partial_x \nu&=c_2
\end{array}
\right.
\end{equation}
with
\begin{equation}\label{c1 c2 psi1 psi2}
c_1= -\frac{\sin (2 \psi_1)}{\sin^2(\psi_1)+\sinh^2(\psi_2)}\hspace{.5cm}\mbox{and}\hspace{.5cm} c_2= -\frac{\sinh (2 \psi_2)}{\sin^2(\psi_1)+\sinh^2(\psi_2)}.
\end{equation}
\end{lem}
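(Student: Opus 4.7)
The plan is to carry out the Lie bracket computation directly, exploiting the fact that $\beta=-2z\cot\psi$ is $\C$-affine in $z$, so $u=\re\beta$ and $v=\im\beta$ are $\R$-affine in $(x,y)$ with constant partial derivatives. First I would compute these derivatives by rationalizing $\cot\psi=\cos\psi/\sin\psi$ with $\psi=\psi_1+i\psi_2$ via $\cos(\psi_1+i\psi_2)=\cos\psi_1\cosh\psi_2-i\sin\psi_1\sinh\psi_2$ and the analogous expansion of $\sin(\psi_1+i\psi_2)$: the common denominator reduces to $D:=\sin^2\psi_1+\sinh^2\psi_2$ and the numerator to $\tfrac12(\sin 2\psi_1-i\sinh 2\psi_2)$. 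Multiplying by $-2z=-2(x+iy)$ yields
\[
u=-\tfrac{\sin 2\psi_1}{D}\,x-\tfrac{\sinh 2\psi_2}{D}\,y,\qquad v=\tfrac{\sinh 2\psi_2}{D}\,x-\tfrac{\sin 2\psi_1}{D}\,y,
\]
and in particular $u_x=c_1$ and $u_y=c_2$ with $c_1,c_2$ as in (\ref{c1 c2 psi1 psi2}). Only the derivatives of $u$ will enter below, since $v$ does not appear in (\ref{a1 a2 tangent frame}).

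Next I would identify $\alpha_1,\alpha_2$ with the real vector fields
\[
\alpha_1=\tfrac{\sin u}{\mu}\partial_x+\tfrac{\cos u}{\nu}\partial_y,\qquad \alpha_2=-\tfrac{\cos u}{\mu}\partial_x+\tfrac{\sin u}{\nu}\partial_y,
\]
and expand $[\alpha_1,\alpha_2]$ componentwise via the standard formula. After substituting $u_x=c_1$ and $u_y=c_2$, the $\partial_x$-component splits into terms proportional to $\mu_x/\mu^3$, $\mu_y/(\mu^2\nu)$, $c_1/\mu^2$, and $c_2/(\mu\nu)$; the $\mu_x$-terms pair up with opposite signs and cancel, as do the $c_2/(\mu\nu)$-terms, and using $\sin^2u+\cos^2u=1$ the remainder collapses to $(c_1+\mu_y/\nu)/\mu^2$. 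A symmetric calculation for the $\partial_y$-component yields $(c_2-\nu_x/\mu)/\nu^2$ after the analogous $\nu_y$- and $c_1$-terms cancel.

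Setting both components of $[\alpha_1,\alpha_2]$ to zero then gives precisely the system (\ref{Eq::Condition}). The only real obstacle is the sign and algebraic bookkeeping in the bracket expansion; the cancellation pattern itself is essentially forced by the structure of $(\alpha_1,\alpha_2)$ as the rotation by the real angle $u$ of the diagonal orthonormal pair $(\mu^{-1}\partial_x,\nu^{-1}\partial_y)$.
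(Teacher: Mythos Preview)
Your proposal is correct and follows essentially the same route as the paper: a direct computation of $[\alpha_1,\alpha_2]$ from (\ref{a1 a2 tangent frame}), yielding the $\partial_x$- and $\partial_y$-components $(\partial_y\mu+\nu\,u_x)/(\mu^2\nu)$ and $(-\partial_x\nu+\mu\,u_y)/(\mu\nu^2)$, together with the identification $u_x=-2\re(\cot\psi)=c_1$, $u_y=2\im(\cot\psi)=c_2$ obtained by rationalizing $\cot(\psi_1+i\psi_2)$. The paper presents these two ingredients in the opposite order and does not write out $v$ or the intermediate cancellations, but the argument is the same.
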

\begin{proof}
A straightforward computation yields
\[
 [\alpha_1,\alpha_2]  =  \left( \frac{\partial_y\mu+\nu\ \partial_xu}{\mu^2 \nu}\right) \partial_x  + \left( \frac{-\partial_x\nu+\mu\ \partial_yu}{\mu \nu^2}\right) \partial_y,
\]
and  $[\alpha_1,\alpha_2]=0$ if and only if 
\begin{equation}\label{mu nu ux uy}
\partial_y\mu=-\nu\ \partial_xu\hspace{.5cm} \mbox{and}\hspace{.5cm} \partial_x\nu=\mu\ \partial_yu.
\end{equation} 
We have by definition 
\[
u=\re (\beta)=-2\re (z \cot (\psi)),
\]
which implies
\begin{equation}\label{ux uy psi}
\partial_xu = -2\re (\cot (\psi))\hspace{.5cm}\mbox{and}\hspace{.5cm} \partial_yu = 2\im (\cot (\psi)). 
\end{equation}
Writing $\psi=\psi_1+i\psi_2$, we easily get $\partial_xu=c_1$ and $\partial_yu=c_2$ where $c_1$ and $c_2$ are given by (\ref{c1 c2 psi1 psi2}), and obtain from (\ref{mu nu ux uy}) the system (\ref{Eq::Condition}). 
\end{proof}
\begin{rem}
Computing the Christoffel symbols of the metric (\ref{metric x y}) and setting
$$T_1:=\frac{1}{\mu}\partial_x,\hspace{1cm}T_2:=\frac{1}{\nu}\partial_y$$
it appears that (\ref{Eq::Condition}) is equivalent to the equations
\begin{equation}\label{nabla Ti}
\nabla T_1=(c_1dx+c_2dy)\ T_2\hspace{.5cm}\mbox{and}\hspace{.5cm}\nabla T_2=-(c_1dx+c_2dy)\ T_1.
\end{equation}
\end{rem}
\subsection{Reformulation of the representation theorem}
We may then reformulate Theorem \ref{thm representation} as follows:
\begin{thm}\label{thm representation 2}
Let us assume that $G:\C\rightarrow S^2_{\C}$ is given by (\ref{G function z}), $g:\C\rightarrow S^3_{\C}$ is an horizontal lift of $G$ and $\beta=u+iv:\C\rightarrow\C$ is such that (\ref{relation g beta}) holds. If $\mu$ and $\nu$ are non-vanishing solutions of (\ref{Eq::Condition}) on a simply connected open set $\mathcal{U}\subset\C,$ then, setting
\begin{equation}\label{omegas u mu nu}
\omega_1= \sin (u) \mu\ dx+  \cos (u) \nu\ dy,\hspace{1cm}\omega_2=-\cos (u) \mu\ dx+  \sin (u) \nu\ dy
\end{equation}
and
\begin{equation}\label{xi g omegas}
\xi:=g^{-1} (\omega_1J+\omega_2K)\ \widehat{g},
\end{equation}
the formula
$$F=\int\xi:\ \mathcal{U}\rightarrow\R^{1,3}$$ 
defines a spacelike surface with constant angle $\psi=\psi_1+i\psi_2.$ Moreover, the metric is $\mu^2dx^2+\nu^2dy^2.$ Reciprocally, up to a rigid motion of $\R^{1,3},$ a spacelike surface of constant angle $\psi$ and regular Gauss map may be locally written in that form. 
\end{thm}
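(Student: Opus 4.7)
The plan is to derive Theorem \ref{thm representation 2} as a direct reformulation of Theorem \ref{thm representation}, using the change of unknowns $(h_0,h_1)\leftrightarrow(\mu,\nu)$ introduced in (\ref{mu nu h0 h1}) and already exploited in Lemma \ref{c1 c2 lemma}. First I would observe that the linear system (\ref{mu nu h0 h1}) expressing $1/\mu$ and $1/\nu$ in terms of $h_0,h_1$ has determinant $\sinh^2 v-\cosh^2 v=-1$, so it defines a bijection between pairs $(h_0,h_1)$ with $(h_0\sinh v+h_1\cosh v)(h_0\cosh v+h_1\sinh v)\neq 0$ and pairs $(\mu,\nu)$ with $\mu,\nu$ non-vanishing. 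This lets me move freely between the two parametrizations.

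Next, for the forward direction, I would start from non-vanishing solutions $\mu,\nu$ of (\ref{Eq::Condition}) on a simply connected open set $\mathcal U\subset\C$, and set $h_0,h_1$ by inverting (\ref{mu nu h0 h1}). The associated tangent vector fields $\alpha_1,\alpha_2$ given in Theorem \ref{thm representation} then take the compact form (\ref{a1 a2 tangent frame}); a short determinant computation shows they are linearly independent at every point (the determinant of their real components is $1/(\mu\nu)\neq 0$), and Lemma \ref{c1 c2 lemma} guarantees that $[\alpha_1,\alpha_2]=0$ since $\mu,\nu$ satisfy (\ref{Eq::Condition}). The hypotheses of Theorem \ref{thm representation} are therefore fulfilled, which immediately yields a spacelike constant angle surface $F=\int\xi$ of angle $\psi$.

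It then remains to identify the dual frame and the metric. I would compute the $1$-forms dual to (\ref{a1 a2 tangent frame}) by inverting the $2\times 2$ matrix of components in $(\partial_x,\partial_y)$; this yields exactly the forms $\omega_1,\omega_2$ in (\ref{omegas u mu nu}). Substituting in the general formula (\ref{xi thm}) recovers (\ref{xi g omegas}), and a short expansion gives
\[
\omega_1^2+\omega_2^2=\mu^2 dx^2+\nu^2 dy^2,
\]
the metric stated in the theorem (the $dxdy$ cross terms cancel by $\sin u\cos u - \cos u\sin u=0$).

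For the converse, I would invoke the converse part of Theorem \ref{thm representation}: any constant angle surface with regular Gauss map is locally of the form $F=\int\xi$ for some $h_0,h_1$ with $(\alpha_1,\alpha_2)$ independent and commuting. I then define $\mu,\nu$ by (\ref{mu nu h0 h1}); independence of $\alpha_1,\alpha_2$ ensures $\mu,\nu$ are non-vanishing, and Lemma \ref{c1 c2 lemma} turns the bracket condition into system (\ref{Eq::Condition}). The only subtlety here is that the change of variables (\ref{mu nu h0 h1}) requires the two linear combinations of $h_0,h_1$ to be non-zero; this is precisely equivalent, by the explicit expressions following (\ref{mu nu h0 h1}), to $\alpha_1$ and $\alpha_2$ being independent as real vector fields, so no extra hypothesis is needed. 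Consequently, the only ``obstacle'' in the proof is really bookkeeping: verifying that the algebraic correspondence $(h_0,h_1)\leftrightarrow(\mu,\nu)$ is faithful under the non-degeneracy assumptions, after which the theorem follows directly from Theorem \ref{thm representation} and Lemma \ref{c1 c2 lemma}.
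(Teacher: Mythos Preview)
Your proposal is correct and follows essentially the same route as the paper: both argue that Theorem \ref{thm representation 2} is a direct reformulation of Theorem \ref{thm representation} via the correspondence between $(h_0,h_1)$ and $(\mu,\nu)$, with Lemma \ref{c1 c2 lemma} converting the bracket condition $[\alpha_1,\alpha_2]=0$ into system (\ref{Eq::Condition}) and the duality between (\ref{a1 a2 tangent frame}) and (\ref{omegas u mu nu}) handling the rest. The paper's proof is terser and works directly with the forms $\omega_1,\omega_2$ rather than explicitly inverting (\ref{mu nu h0 h1}) to recover $h_0,h_1$, but the substance is identical.
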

\begin{proof}
In order to show that this is a reformulation of Theorem \ref{thm representation}, we only observe that $\omega_1$ and $\omega_2$ are the dual forms of two independent vectors fields $\alpha_1$ and $\alpha_2$ $\in\Gamma(T\mathcal{U})$ such that $[\alpha_1,\alpha_2]=0:$ the forms $\omega_1$ and $\omega_2$ are independent since $\mu\nu\neq 0$ at every point, and their dual vectors fields $\alpha_1$ and $\alpha_2$ are given by \eqref{a1 a2 tangent frame}; moreover, by  Lemma \ref{c1 c2 lemma} they are such that $[\alpha_1,\alpha_2]=0$ since $\mu$ and $\nu$ are solutions of (\ref{Eq::Condition}). Finally, the metric is $\omega_1^2+\omega_2^2=\mu^2dx^2+\nu^2dy^2$.
\end{proof}

\subsection{Resolution of the system (\ref{Eq::Condition})}
We now focus on the resolution of \eqref{Eq::Condition} and assume that $\psi_1\neq 0\ [\pi/2]$ or $\psi_2\neq 0$ so that $c_1$ or $c_2\neq 0$ (if $\psi_1=0\ [\pi/2]$ and $\psi_2=0$ then $\psi=0\ [\pi]$ or $\psi=\pi/2\ [\pi]:$ the first case is studied in Appendix \ref{app case psi 0 mod pi} and the second case in Theorem \ref{thm psi pi sur 2} in Section \ref{section product} below). Let us first observe that the resolution of this system is then equivalent to the resolution of the single hyperbolic PDE
\begin{equation} \label{PDE-A}
 \partial^2_{x y}\zeta = -c_1 c_2\ \zeta
\end{equation}
for $\zeta=\mu$ or $\nu,$ which is a 1-dimensional Klein-Gordon equation. Indeed, if $\mu$ and $\nu$ satisfy (\ref{Eq::Condition}) then they obviously also satisfy (\ref{PDE-A}). Conversely, assuming first that $c_1\neq 0,$ if $\mu$ is a solution of (\ref{PDE-A}) we obtain a solution $\mu,\nu$ of (\ref{Eq::Condition}) by setting $\nu:=-\frac{1}{c_1}\partial_y\mu,$ and, similarly, if $c_2\neq 0$ and $\nu$ is a solution of (\ref{PDE-A}) we obtain a solution $\mu,\nu$ of (\ref{Eq::Condition}) by setting $\mu:=\frac{1}{c_2}\partial_x\nu.$ We finally note that we may solve a Cauchy problem for (\ref{PDE-A}): let us fix a smooth regular curve $\Gamma=(\gamma_1,\gamma_2)$ in the coordinate plane which does not intersect any line parallel to the coordinate axes in more than one point and let us consider two smooth functions $f,g$ on $\Gamma$; then there exists a unique solution $\zeta$ of (\ref{PDE-A}) such that 
\begin{equation} \label{Eq::InitialCond}
\left\lbrace
\begin{array}{rl}
 \left. \zeta \right|_\Gamma &= f \\[8pt]
 \left. \partial_n \zeta \right|_\Gamma &=g,
\end{array}
\right.
\end{equation}
where $\partial_n$ denotes differentiation with respect to the direction normal to the curve. If the above geometric condition on $\Gamma$ is not satisfied, the Cauchy problem is in general insoluble. It thus appears that a general constant angle surface in $\R^{1,3}$ locally depends on two arbitrary real functions of one real variable (the initial conditions $f,g$ of the Cauchy problem (\ref{PDE-A})-(\ref{Eq::InitialCond}) for $\mu$ or for $\nu$). Details on this Cauchy problem and its explicit resolution using a Bessel function may be found in \cite[Chapter II]{Kosh}.

\section{A frame adapted to a constant angle surface}\label{section adapted frame}

With the last representation theorem and the explicit expression of the lift $g$ of the Gauss map (Lemma \ref{lift G}), we can  construct a special orthonormal frame adapted to a given constant angle surface. We will first give a geometric construction of this frame, and then its explicit expression. We finally use these results to obtain easily the second order invariants of the surface.

\subsection{Geometric construction of an adapted frame}
Let us assume that the immersion is given as in Theorem \ref{thm representation 2}. We consider the orthonormal frame tangent to the immersion
\begin{equation}\label{dF Ti}
T_1:=\frac{1}{\mu}\partial_xF=\frac{1}{\mu}\xi(\partial_x),\hspace{1cm} T_2:=\frac{1}{\nu}\partial_yF=\frac{1}{\nu}\xi(\partial_y).
\end{equation}
Let us note that $(T_1,T_2)$ is positively oriented: by the very definitions of $\xi,$ $\omega_1$ and $\omega_2$ in Theorem \ref{thm representation 2} we have
\begin{eqnarray*}
\partial_xF&=&\xi(\partial_x)\\
&=&g^{-1}(\omega_1(\partial_x)J+\omega_2(\partial_x)K)\widehat{g}\\
&=&\mu g^{-1}(\sin u\ J-\cos u\ K)\widehat{g}
\end{eqnarray*} 
and similarly
$$\partial_yF=\nu g^{-1}(\cos u\ J+\sin u\ K)\widehat{g}.$$
We thus have, in $\HC,$
$$\partial_x F\ \widehat{\partial_yF}=\mu\nu g^{-1}Ig=\mu\nu G,$$
which implies that 
$$T_1\cdot T_2=G,$$
and thus that $(T_1,T_2)$ is positively oriented.

Let us recall that the curvature ellipse at a point $x_o\in M$ is the ellipse in the normal plane
$$\{\mathbf{II}(w,w):\ w\in T_{x_o}M,\ |w|=1\}\subset N_{x_o}M.$$
\begin{prop}
The curvature ellipse is a segment $[\frac{2}{\mu} N_1,\frac{2}{\nu} N_2]$ where $N_1,N_2$ are normal vectors such that $|N_1|^2=-|N_2|^2=1$ and $\langle N_1,N_2\rangle=0.$ Moreover
\begin{equation}\label{ff Ti}
\mathbf{II}(T_1,T_1)=\frac{2}{\mu} N_1,\hspace{.5cm}\mathbf{II}(T_2,T_2)=\frac{2}{\nu} N_2\hspace{.5cm}\mbox{and}\hspace{.5cm}\mathbf{II}(T_1,T_2)=0.
\end{equation}
The vector $N_2$ is future-directed and $(N_2,N_1)$ is a positively oriented basis of the plane normal to $M.$
\end{prop}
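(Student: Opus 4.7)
The plan is to extract $\mathbf{II}(T_i,T_j)$ from the ambient derivatives $\partial_x T_j,\ \partial_y T_j$, computed via the quaternionic formulas
$$T_1 = g^{-1}(\sin u\ J - \cos u\ K)\widehat g,\qquad T_2 = g^{-1}(\cos u\ J + \sin u\ K)\widehat g$$
just derived, and then to read off $N_1$ and $N_2$ from those computations.

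First I would differentiate $T_1$ and $T_2$ using four ingredients: $\partial_x g = g'$ and $\partial_y g = i g'$, since $g$ is holomorphic in $z=x+iy$ by Lemma \ref{lift G}; the horizontality $g' g^{-1} = \cos\beta\ J + \sin\beta\ K$ with $\beta=u+iv$; the formulas $\partial_x u = c_1$ and $\partial_y u = c_2$ of \eqref{ux uy psi}; and the integrability system \eqref{Eq::Condition} which governs $\partial_y\mu$ and $\partial_x\nu$. After expansion, each derivative takes the form $g^{-1} Q\ \widehat g$ for a $Q \in \HC$ which splits, under the natural decomposition $\HC = (\C\cdot 1 \oplus \C I) \oplus (\C J \oplus \C K)$, into a normal summand (which is precisely $\mathbf{II}(T_i,T_j)$) and a tangent summand reproducing the intrinsic Levi-Civita connection of \eqref{nabla Ti}.

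The main obstacle is proving $\mathbf{II}(T_1,T_2)=0$. I expect this to be the cancellation that is encoded by \eqref{Eq::Condition} itself, which was obtained as the integrability condition $[\alpha_1,\alpha_2]=0$; combined with the horizontality of $g$, the cross-derivatives of $T_1$ and $T_2$ should produce tangential residues only. Once that vanishing is established, I would set $N_1 := \tfrac{\mu}{2}\mathbf{II}(T_1,T_1)$ and $N_2 := \tfrac{\nu}{2}\mathbf{II}(T_2,T_2)$ and check $|N_1|^2 = 1$, $|N_2|^2 = -1$, $\langle N_1, N_2\rangle = 0$ by direct evaluation of $H$ on the representing quaternions, using that the sandwich $q \mapsto g^{-1} q\, \widehat g$ is an isometry and that $H(1,1)=H(I,I)=1$, $H(1,I)=0$.

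Since $\mathbf{II}(T_1,T_2)=0$, the curvature ellipse is the image of
$$\theta \mapsto \cos^2\theta\ \mathbf{II}(T_1,T_1) + \sin^2\theta\ \mathbf{II}(T_2,T_2),$$
namely the segment joining $\tfrac{2}{\mu}N_1$ and $\tfrac{2}{\nu}N_2$. The positive orientation of $(N_2,N_1)$ in the normal plane and the future-orientation of $N_2$ are open conditions on the connected parameter domain, so it suffices to verify them at a single point, e.g.\ $z=0$, where $g$ is explicit from \eqref{explicit g} and the relevant components may be computed by hand.
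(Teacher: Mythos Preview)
Your approach is correct but genuinely different from the paper's. The paper argues geometrically: first $K_N=0$ forces the curvature ellipse to be a segment $[\alpha,\beta]$, and $K=0$ with the Gauss equation gives $\langle\alpha,\beta\rangle=0$; the vanishing $\mathbf{II}(T_1,T_2)=0$ is then deduced from the identity $dG(w)=\mathbf{II}(T_1,w)\wedge T_2+T_1\wedge\mathbf{II}(T_2,w)$ together with $H(G',G')=4$, which forces $dG(T_1)\wedge dG(T_1)=0$ and hence (via the segment structure) the off-diagonal term to vanish. Orientation is obtained from the Clifford identity $N_2\cdot N_1\cdot T_1\cdot T_2=i$, and the time-orientation of $N_2$ from the decomposition of $\vec H$ in the parallel normal frame $(h_0,h_1)$. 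Your route is the direct quaternionic computation---essentially what the paper defers to its Appendix~\ref{App::CompFrame} for explicit formulas---and it gives the second fundamental form without invoking the curvature ellipse at all. Each approach has its merit: the paper's is conceptual and explains \emph{why} the ellipse degenerates; yours is self-contained and produces $N_1,N_2$ explicitly in one stroke.

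Two small corrections. First, the cancellation giving $\mathbf{II}(T_1,T_2)=0$ does \emph{not} use \eqref{Eq::Condition}: the unit fields $T_1,T_2$ are independent of $\mu,\nu$, and the normal ($1,I$) part of $\partial_y T_1$ vanishes purely from the horizontality $g'g^{-1}=\cos\beta\,J+\sin\beta\,K$ with $\beta=u+iv$ (the two products $(\cos\beta\,J+\sin\beta\,K)(\sin u\,J-\cos u\,K)$ and $(\sin u\,J-\cos u\,K)(\cos\bar\beta\,J+\sin\bar\beta\,K)$ contribute opposite $1,I$ components once multiplied by $-i$). System \eqref{Eq::Condition} governs the \emph{tangential} residues, i.e.\ \eqref{nabla Ti}. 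Second, the splitting you invoke is not quite $(\C\cdot 1\oplus\C I)\oplus(\C J\oplus\C K)$: vectors of $\R^{1,3}$ are of the form $ix_0 1+x_1I+x_2J+x_3K$ with real $x_j$, so after conjugation by $g$ the tangent plane is $\R J\oplus\R K$ and the normal plane is $i\R\cdot 1\oplus\R I$. With that adjustment your norm checks via $H$ go through, since $H$ restricted to $\R^{1,3}\subset\HC$ is exactly the Minkowski form.
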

We thus obtain a natural moving frame $(N_2,N_1,T_1,T_2)$ adapted to the constant angle surface. This frame is moreover positively oriented in $\R^{1,3}$ and such that its first vector is future-directed.
\begin{proof}
Since $K_N=0$ the curvature ellipse is a segment $[\alpha,\beta]\subset N_{x_o}M$ and since $K=0,$ $\langle\alpha,\beta\rangle=0$ (by the Gauss equation). Let us also note that $\alpha,\beta\neq 0$ since  $\mathbf{II}$ is not degenerate (if for instance $\alpha=0$ and $w\in T_{x_o}M,$ $|w|=1$ is such that $\mathbf{II}(w,w)=\alpha,$ we would have $\mathbf{II}(w,w)=\mathbf{II}(w,w^{\perp})=0$ (the curvature ellipse is a segment with extremal point $\mathbf{II}(w,w)$) and thus $dG_{x_o}(w)=0,$ in contradiction with $G'(x_o)\neq 0$). Let us first show that $\mathbf{II}(T_1,T_2)=0.$ Differentiating $G=T_1\wedge T_2,$ we easily get, for all $w\in T_{x_o}M,$
\begin{equation}\label{dG w}
dG(w)=\mathbf{II}\left(T_1,w\right)\wedge T_2+T_1\wedge \mathbf{II}\left(T_2,w\right).
\end{equation}
But we also have
\begin{equation}\label{dG wp}
dG(w)=G'w=2J\left\{-\sin\left(\frac{2z}{\sin\psi}\right)+\cos\left(\frac{2z}{\sin\psi}\right)I\right\}w.
\end{equation}
For $w=T_1\simeq \frac{1}{\mu}\in\C$ we get $H(dG(T_1),dG(T_1))=\frac{4}{\mu^2}$ which is equivalent to
\begin{equation}\label{DGT1 scal wedge}
\langle dG(T_1),dG(T_1)\rangle=\frac{4}{\mu^2}\hspace{.5cm}\mbox{and}\hspace{.5cm}dG(T_1)\wedge dG(T_1)=0.
\end{equation}
In view of (\ref{dG w}) with $w=T_1,$ the second property reads
\begin{equation}\label{wedge II 0}
\mathbf{II}\left(T_1,T_1\right)\wedge \mathbf{II}\left(T_1,T_2\right)=0.
\end{equation}
Since $\mathbf{II}\left(T_1,T_1\right)$ belongs to the curvature ellipse $[\alpha,\beta]$ and $\mathbf{II}\left(T_1,T_2\right)$ is tangent to the ellipse,
we can write
$$\mathbf{II}\left(T_1,T_1\right)=\alpha+\lambda(\beta-\alpha)\hspace{.5cm}\mathbf{II}\left(T_1,T_2\right)=\lambda'(\beta-\alpha)$$
for some $\lambda,\lambda'\in \R,$ and (\ref{wedge II 0}) then implies $\lambda'\alpha\wedge\beta=0.$ This in turn implies $\lambda'=0:$ by contradiction, if $\lambda'\neq 0$ we would obtain $\alpha\wedge\beta=0$ and since $\langle\alpha,\beta\rangle=0,$ $\alpha$ and $\beta$ would be collinear null vectors; the norm of $dG(T_1)=\alpha\wedge T_2+T_1\wedge\lambda'(\beta-\alpha)$ would then be zero, in contradiction with (\ref{DGT1 scal wedge}). Thus $\lambda'=0$ and $\mathbf{II}(T_1,T_2)=0.$

Since $\mathbf{II}(T_1,T_2)=0$ we get that $\mathbf{II}(T_1,T_1)$ and $\mathbf{II}(T_2,T_2)$ are the extremal points of the curvature ellipse, and we assume that
$$\alpha=\mathbf{II}(T_1,T_1)\hspace{.5cm}\mbox{and}\hspace{.5cm}\beta=\mathbf{II}(T_2,T_2).$$
We deduce from (\ref{dG w}) that $dG(T_1)=\alpha\wedge T_2$ and from (\ref{DGT1 scal wedge}) that $|\alpha|^2=\frac{4}{\mu^2}.$ So there exists a unit spacelike vector $N_1$ such that $\alpha=\frac{2}{\mu}N_1.$ Similarly, we obtain from (\ref{dG w}) and (\ref{dG wp}) with $w=T_2\simeq\frac{i}{\nu}\in\C$ that
$$dG(T_2)=T_1\wedge \beta=\frac{i}{\nu}G',$$
which implies that $|\beta|^2=-\frac{4}{\nu^2},$ and thus that there exists a unit timelike vector $N_2$ such that $\beta=\frac{2}{\nu}N_2.$ Let us finally show that $(N_2,N_1)$ is a positively oriented basis of $N_{x_o}M$ with $N_2$ future-oriented: by (\ref{ff Ti}), (\ref{dG w}) and (\ref{dG wp}) with $w:=T_1\simeq\frac{1}{\mu}$ and $w:=T_2\simeq\frac{i}{\nu}$ we obtain
$$2N_1\cdot T_2=G'\hspace{.5cm}\mbox{and}\hspace{.5cm}2T_1\cdot N_2=iG'. $$
Now, we have
$$G'=2J\left(-\sin\left(\frac{2z}{\sin\psi}\right)+\cos\left(\frac{2z}{\sin\psi}\right) I\right)$$
and thus $G'^2=-4$ in $\HC;$ this implies that, in $\HC,$
$$N_2\cdot N_1\cdot T_1\cdot T_2=i,$$
which is also the canonical volume form $e_0^o\cdot e_1^o\cdot e_2^o\cdot e_3^o$ of $\R^{1,3}.$ The basis $(N_2,N_1,T_1,T_2)$ is thus positively oriented in $\R^{1,3},$ and so is $(N_2,N_1)$ in $N_{x_o}M.$ The vector $N_2$ is future-directed: we have
\begin{eqnarray*}
\vec H&=&\frac{1}{2}\left(\mathbf{II}(T_1,T_1)+\mathbf{II}(T_2,T_2)\right)\\
&=&\frac{1}{\mu}N_1+\frac{1}{\nu}N_2\\
&=&(h_0\sinh v+h_1\cosh v)N_1+(h_0\cosh v+h_1\sinh v)N_2
\end{eqnarray*}
by (\ref{mu nu h0 h1}), i.e.
$$\vec H=h_0(\sinh v\ N_1+\cosh v\ N_2)+h_1(\cosh v\ N_1+\sinh v\ N_2).$$
 Since $h_0$ and $h_1$ are by hypothesis the coordinates of $\vec H$ in a normal basis which is positively oriented in space and in time, the vector $\sinh v\ N_1+\cosh v\ N_2$ is future-directed, and so is $N_2.$ This proves the proposition.
\end{proof}

\subsection{Explicit expression of the adapted frame.}\label{section explicit expression frame}

We only give here results of calculations, and refer to Appendix \ref{App::CompFrame} for more details. Direct computations using the special lift (\ref{explicit g}) of the Gauss map and the representation formula (\ref{omegas u mu nu})-(\ref{xi g omegas}) give the following explicit formulas:
$$T_1=
\left(
-\sinh \left(\psi_2\right) 
\cosh \left( \varphi_2 \right),
- \sinh \left(\psi_2 \right)  \sinh \left( \varphi_2 \right),
 \cosh \left( \psi_2 \right)  \sin \left(\varphi_1 \right),
\cosh\left(\psi_2 \right)\cos \left(\varphi_1 \right)
\right)$$
and 
$$T_2=\left(
 \sin \left(\psi_1\right) 
\sinh \left( \varphi_2 \right),
\sin \left(\psi_1 \right)  \cosh \left( \varphi_2 \right),
-  \cos \left( \psi_1 \right)  \cos \left(\varphi_1 \right),
 \cos \left(\psi_1 \right) \sin \left(\varphi_1 \right)
\right)$$
where
$$\psi=\psi_1+i\psi_2\hspace{.5cm}  \mbox{and}\hspace{.5cm}\varphi:=\dfrac{2 z}{\sin \psi} =\varphi_1+i\varphi_2.$$
Let us note that (\ref{nabla Ti}) and (\ref{ff Ti}) imply that
\begin{equation}\label{formulas dT}
dT_1=(c_1 T_2 + 2 N_1)dx+c_2 T_2dy,\hspace{.5cm}dT_2=-c_1T_1dx+(-c_2 T_1 + 2 N_2)dy
\end{equation}
which may naturally also be obtained by direct computations. Similarly, we also have 
$$N_1=\left( \cos (\psi_1) \sinh (\varphi_2 ),\cos (\psi_1) \cosh (\varphi_2 ),\sin (\psi_1) \cos (\varphi_1),-\sin (\psi_1) \sin (\varphi_1) \right),$$
$$N_2= \left(\cosh (\psi_2) \cosh (\varphi_2),\cosh (\psi_2) \sinh (\varphi_2),-\sinh (\psi_2)\sin (\varphi_1),-\sinh (\psi_2)\cos (\varphi_1)\right),$$
\begin{equation}\label{formulas dN}
dN_1  = (-2 T_1 +c_2 N_2)dx-c_1 N_2dy,\hspace{.5cm} dN_2 =c_2 N_1 dx+(2 T_2-c_1 N_1)dy,
\end{equation}
and thus
\begin{equation}\label{nablap Ni}
\nabla' N_1=(c_2dx-c_1dy)N_2,\hspace{1cm}\nabla' N_2=(c_2dx-c_1dy)N_1.
\end{equation}
It appears on these formulas that the special frame $(T_1,T_2,N_1,N_2)$ only depends on the constant angle $\psi$ and the value of the parameter $z.$ We have by (\ref{dF Ti})
\begin{equation}\label{F mu nu Ti}
F=\int \mu T_1 dx+\nu T_2 dy,
\end{equation}
and the immersion with constant angle $\psi$ with respect to $e_2^o\wedge e_3^o$ is entirely determined by $\mu$ and $\nu,$ in accordance with Theorem \ref{thm representation 2}. 

\subsection{Second order invariants of a constant angle surface.}\label{section invariants}
Let us fix a point $x_o\in M$ and consider the quadratic form $\delta:T_{x_o}M\rightarrow\R$ defined by
$$\delta:=\frac{1}{2}dG_{x_o}\wedge dG_{x_o}$$
where $\Lambda^4\R^{1,3}$ is naturally identified with $\R$ as in Section \ref{section model grassm}. Let us recall from \cite{BaySB} that $\delta$ determines the asymptotic directions of the surface at $x_o,$ and also the fourth numerical invariant of the surface at that point 
$$\Delta:=disc\ \delta$$
(the other three invariants are $K,$ $K_N$ and $|\vec{H}|^2$). We recall that a non-zero vector $w\in T_{x_o}M$ is said to be an asymptotic direction of $M$ at $x_o$ if $\delta(w)=0;$ the existence of a pair of asymptotic directions is thus determined by the sign of $\Delta.$ In the next lemma we compute $\delta$ and the invariant $\Delta$ for a constant angle surface:
\begin{lem}
The matrix of $\delta$ in the orthonormal basis $(T_1,T_2)$ of $T_{x_o}M$ is
$$Mat(\delta,(T_1,T_2))=\frac{2}{\mu\nu}\left(\begin{array}{cc}
0&1\\
1&0
\end{array}\right).$$
In particular $\Delta=-\frac{4}{\mu^2\nu^2}$ and $T_1,T_2$ are the asymptotic directions of $M$ at $x_o.$ 
\end{lem}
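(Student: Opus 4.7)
The plan is to compute $\delta$ directly on the basis $(T_1,T_2)$ using the two ingredients already available: the formula (\ref{dG w}) giving $dG(w)$ in terms of $\mathbf{II}$, and the previous proposition which gives $\mathbf{II}(T_1,T_1)=\tfrac{2}{\mu}N_1$, $\mathbf{II}(T_2,T_2)=\tfrac{2}{\nu}N_2$, $\mathbf{II}(T_1,T_2)=0$.

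First I would substitute $w=T_1$ and $w=T_2$ into (\ref{dG w}). Since the mixed second fundamental form vanishes, this gives immediately
$$dG(T_1)=\tfrac{2}{\mu}\,N_1\wedge T_2,\qquad dG(T_2)=\tfrac{2}{\nu}\,T_1\wedge N_2.$$
Then I would polarize: the symmetric bilinear form $B$ associated with the quadratic form $\delta(w)=\tfrac12 dG(w)\wedge dG(w)$ is $B(w,w')=\tfrac12 dG(w)\wedge dG(w')$, using that the wedge of two 2-forms is symmetric. The diagonal entries $B(T_1,T_1)$ and $B(T_2,T_2)$ then vanish at once, because each equals a wedge of a simple bivector with itself having a repeated factor ($N_1\wedge T_2\wedge N_1\wedge T_2=0$ and similarly for the other).

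For the off-diagonal entry I would compute
$$B(T_1,T_2)=\tfrac{1}{2}\,dG(T_1)\wedge dG(T_2)=\tfrac{2}{\mu\nu}\,N_1\wedge T_2\wedge T_1\wedge N_2.$$
The remaining step is to identify the 4-vector $N_1\wedge T_2\wedge T_1\wedge N_2$ with a real number via the canonical volume form. The previous proposition ensures that $(N_2,N_1,T_1,T_2)$ is a positively oriented orthonormal basis of $\R^{1,3}$ with $N_2$ future-directed, so $N_2\wedge N_1\wedge T_1\wedge T_2=e_0^o\wedge e_1^o\wedge e_2^o\wedge e_3^o\equiv 1$. A routine sign count for the permutation sending $(N_1,T_2,T_1,N_2)$ to $(N_2,N_1,T_1,T_2)$ (a single $3$-cycle, hence even) gives $N_1\wedge T_2\wedge T_1\wedge N_2=+1$, so $B(T_1,T_2)=\tfrac{2}{\mu\nu}$, which is the announced matrix.

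The invariant $\Delta$ is the discriminant of this matrix, so $\Delta=-\tfrac{4}{\mu^2\nu^2}$. For the asymptotic directions, writing $w=aT_1+bT_2$ yields $\delta(w)=\tfrac{4ab}{\mu\nu}$, which vanishes if and only if $a=0$ or $b=0$, i.e.\ exactly along $T_1$ and $T_2$. The only non-routine point is the sign bookkeeping when identifying $N_1\wedge T_2\wedge T_1\wedge N_2$ with $+1$; everything else is a direct substitution.
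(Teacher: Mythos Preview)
Your proposal is correct and follows essentially the same approach as the paper: compute $dG(T_1)$ and $dG(T_2)$ from (\ref{dG w}) and (\ref{ff Ti}), observe the diagonal entries vanish, and identify the off-diagonal entry via the canonical volume element using that $(N_2,N_1,T_1,T_2)$ is positively oriented. The only difference is that you spell out the permutation sign and the verification of the asymptotic directions more explicitly than the paper does.
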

\begin{proof}
By (\ref{ff Ti}) and (\ref{dG w}), we have
$$dG(T_1)=\mathbf{II}(T_1,T_1)\wedge T_2=\frac{2}{\mu}\ N_1\wedge T_2$$
and
$$dG(T_2)=T_1\wedge \mathbf{II}(T_2,T_2)=\frac{2}{\nu}\ T_1\wedge N_2.$$
Thus $\delta(T_1,T_1)=\delta(T_2,T_2)=0$ and
$$\delta(T_1,T_2)=\frac{2}{\mu\nu}\ N_2\wedge N_1\wedge T_1\wedge T_2\simeq \frac{2}{\mu\nu}$$
since $(N_2,N_1,T_1,T_2)$ is a positively oriented and orthonormal basis of $\R^{1,3}$ with $N_2$ timelike and future-oriented.
\end{proof}
Let us also mention that the mean curvature vector of a constant angle surface is given by
$$\vec{H}=\frac{1}{2}\left(\mathbf{II}(T_1,T_1)+\mathbf{II}(T_2,T_2)\right)=\frac{1}{\mu}N_1+\frac{1}{\nu}N_2$$
and thus that 
\begin{equation}\label{H mu nu}
|\vec{H}|^2=\frac{1}{\mu^2}-\frac{1}{\nu^2}.
\end{equation}
We thus have the following
\begin{prop}
The four invariants of $M$ are
$$K=K_N=0,\hspace{.5cm} |\vec{H}|^2=\frac{1}{\mu^2}-\frac{1}{\nu^2}\hspace{.5cm}\mbox{and}\hspace{.5cm} \Delta=-\frac{4}{\mu^2\nu^2}.$$
\end{prop}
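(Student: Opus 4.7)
The plan is to assemble the four invariants by gathering the computations already carried out in Sections \ref{section first properties}--\ref{section adapted frame}; there is no essential new work to do, only a collation, and no step is particularly difficult.

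First, I would handle $K$ and $K_N$ by invoking Proposition \ref{prop pull back}. By definition, the Gauss map of a constant angle surface takes values in the complex circle (\ref{general circle}) in $\mathcal{Q}$, so at every point $x_o\in M$ the image $dG_{x_o}(T_{x_o}M)$ lies in the complex tangent line to this circle at $G(x_o)$. By Proposition \ref{prop pull back}, this forces $G^{*}\omega_{\mathcal Q}=0$, hence $(K+iK_N)\omega_M=0$, i.e. $K=K_N=0$. (This was already observed informally right after (\ref{general circle}).)

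Second, I would read off $|\vec H|^2$ from the adapted frame. Since $(T_1,T_2)$ is an orthonormal tangent frame, equation (\ref{ff Ti}) gives
\[
\vec H \;=\; \tfrac{1}{2}\bigl(\mathbf{II}(T_1,T_1)+\mathbf{II}(T_2,T_2)\bigr) \;=\; \tfrac{1}{\mu}N_1+\tfrac{1}{\nu}N_2,
\]
and the preceding proposition established that $|N_1|^2=1$, $|N_2|^2=-1$, $\langle N_1,N_2\rangle=0$; taking the square norm yields $|\vec H|^2=\frac{1}{\mu^2}-\frac{1}{\nu^2}$, as announced (and as already noted in (\ref{H mu nu})).

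Finally, the formula $\Delta=-\frac{4}{\mu^2\nu^2}$ is exactly the content of the lemma immediately preceding the statement, whose proof computes $\delta(T_i,T_j)$ from (\ref{ff Ti}) and (\ref{dG w}) and observes that $(N_2,N_1,T_1,T_2)$ is a positively oriented orthonormal basis of $\R^{1,3}$. Combining these three observations completes the proof. If anything plays the role of a ``main step,'' it is the calculation of $\delta$ performed in that lemma, but it has already been carried out, so the present proposition is essentially a restatement.
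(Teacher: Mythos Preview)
Your proposal is correct and matches the paper exactly: the proposition is a summary, stated after ``We thus have the following,'' of results already established (the vanishing of $K$ and $K_N$ from Section~\ref{section first properties}, the formula~(\ref{H mu nu}) for $|\vec H|^2$, and the preceding lemma for $\Delta$). There is no separate proof in the paper, and your collation of these ingredients is precisely what is intended.
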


\section{Incompleteness of the constant angle surfaces.}\label{section complete}

\begin{prop}
An oriented spacelike surface in $\R^{1,3}$ with regular Gauss map and constant angle $\psi\neq 0\ [\pi/2]$ is not complete.
\end{prop}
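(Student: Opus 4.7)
The plan is to derive incompleteness directly from the representation of Theorem~\ref{thm representation 2}.  By passing to the universal cover (which is simply connected, inherits the constant angle $\psi$ and the regularity of the Gauss map, and whose completeness is equivalent to that of $M$), one may assume $M$ is simply connected.  The uniformization argument of Section~\ref{subsection gauss map} then supplies a global conformal parameter $z=x+iy$ identifying $M$ with an open set $\mathcal{U}\subset\C$, and Theorem~\ref{thm representation 2} realizes $(M,g)$ isometrically as $(\mathcal{U},\mu^{2}\,dx^{2}+\nu^{2}\,dy^{2})$ with $(\mu,\nu)$ a positive solution of~(\ref{Eq::Condition}).  The hypothesis $\psi\neq 0\ [\pi/2]$ combined with~(\ref{c1 c2 psi1 psi2}) yields $(c_{1},c_{2})\neq(0,0)$, and by exchanging $x$ and $y$ if necessary I may assume $c_{2}\neq 0$.

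I would then distinguish whether $\mathcal{U}$ equals $\C$ or is a proper subset.  If $\mathcal{U}=\C$, the identity $\nu_{x}=c_{2}\mu$ (with $\mu>0$) forces $\nu(\cdot,y_{0})$ to be strictly monotonic and positive for every $y_{0}$, so it admits a finite nonnegative one-sided limit at the end of the ray where it decreases.  Integrating $\nu_{x}=c_{2}\mu$ along that ray yields
\[
|c_{2}|\int \mu(x,y_{0})\,dx \;\leq\; \nu(0,y_{0}) \;<\; \infty,
\]
where the integration is over $(-\infty,0]$ if $c_{2}>0$ and over $[0,+\infty)$ if $c_{2}<0$.  Hence this horizontal coordinate ray has finite intrinsic length, and a discrete sequence marching along it is $g$-Cauchy.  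Since $\mu,\nu$ are positive and continuous, $g$ and the Euclidean metric induce the same topology on $\C$, so a $g$-limit of the sequence would be a Euclidean limit, which does not exist.  Completeness is contradicted.

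If instead $\mathcal{U}\subsetneq\C$, pick a boundary point $p_{*}\in\partial\mathcal{U}$ and any interior $p_{0}$, and follow the Euclidean segment from $p_{0}$ toward $p_{*}$ up to its first exit time $t^{*}\in(0,1]$.  Because $\mu$ and $\nu$ solve the linear Klein-Gordon equation~(\ref{PDE-A}) with smooth coefficients, they extend smoothly, and thus remain bounded, on a neighborhood of this closed segment; the restricted curve then has finite $g$-length, yielding a Cauchy sequence in $(\mathcal{U},g)$ whose Euclidean (hence $g$) limit lies on $\partial\mathcal{U}$, outside $\mathcal{U}$.

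The \emph{main obstacle} is the first case: extracting integrability of $\mu$ along a coordinate ray for an arbitrary positive solution of the Klein-Gordon equation.  It is resolved by the elementary observation that $c_{2}\int\mu\,dx=\nu$ (the structural first-order equation of the representation), so the monotone positivity of $\nu$ bounds the integral automatically and no finer analysis of the hyperbolic PDE is required.
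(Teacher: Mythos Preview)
Your argument has two genuine gaps.

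\textbf{Gap 1 (global representation).} You invoke uniformization to get a global parameter on the simply connected cover and then claim that Theorem~\ref{thm representation 2} realises $(M,g)$ globally as $(\mathcal U,\mu^{2}dx^{2}+\nu^{2}dy^{2})$. But the converse direction of Theorem~\ref{thm representation 2} is explicitly \emph{local}: the parameter $z$ there is the one for which the Gauss map has the specific form~(\ref{G function z}), i.e.\ $z$ is the arc-length parameter on the Gauss circle. In the uniformization parameter $\zeta$ the Gauss map is merely a holomorphic function with nonvanishing derivative into that circle, so $z=z(\zeta)$ is a holomorphic local diffeomorphism $M\to\C$ with no reason to be injective (think of $\zeta\mapsto e^{\zeta}$ on $\C$). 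Hence $z$ need not be a global coordinate on $M$, and the dichotomy ``$\mathcal U=\C$ versus $\mathcal U\subsetneq\C$'' is not well posed. The metric also need not take the diagonal form $\mu^{2}dx^{2}+\nu^{2}dy^{2}$ in the uniformization parameter (this form is tied to the special $z$).

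\textbf{Gap 2 (extension across $\partial\mathcal U$).} Even granting the global picture, your Case~2 argument asserts that because $\mu,\nu$ solve the linear Klein--Gordon equation~(\ref{PDE-A}) they ``extend smoothly'' to a neighbourhood of the closed segment reaching $\partial\mathcal U$. Linear hyperbolic equations have no such automatic extension property; a solution on a proper open set can perfectly well blow up at the boundary (already for $c_{1}c_{2}=0$ one has $\mu(x,y)=f(x)+g(y)$ with $f,g$ arbitrary). The maximal domain of the immersion may be determined precisely by the blow-up of $\mu$ or $\nu$, in which case the segment has infinite $g$-length and your Cauchy-sequence argument fails.

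For comparison, the paper's proof avoids both issues by working purely locally with intrinsic quantities. It observes that along an integral curve of the unit vector field $T_{2}$ the function $f:=|\mu|^{-1}=\tfrac12|\mathbf{II}(T_{1},T_{1})|$ (a globally defined geometric quantity) satisfies, by the first equation of~(\ref{Eq::Condition}), the ODE $(f^{2})'=2c_{1}f^{3}$, which blows up in finite time whenever $c_{1}\neq 0$. Since $f$ is smooth on $M$, the flow of $T_{2}$ cannot be complete, and on a complete Riemannian manifold the flow of a unit vector field is complete; incompleteness follows. The case $c_{1}=0$, $c_{2}\neq 0$ is symmetric using $T_{1}$ and $|\nu|^{-1}$. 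Your Case~1 computation is in the same spirit (it exploits the same first-order relation $\partial_{x}\nu=c_{2}\mu$), but the ODE formulation makes the argument intrinsic and dispenses with any global chart.
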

\begin{proof}
Recalling (\ref{c1 c2 psi1 psi2}), the property $\psi\neq 0\ [\pi/2]$ is equivalent to $c_1$ or $c_2\neq 0.$ Let us first assume that $c_1\neq 0.$ Since
$$\langle\frac{1}{\mu}T_1,\partial x\rangle=1\hspace{.5cm}\mbox{and}\hspace{.5cm}\langle\frac{1}{\mu}T_1,\partial y\rangle=0,$$
the gradient of the function $x$ on $\mathcal{U}$ is $\nabla x=\frac{1}{\mu}T_1.$ Let us consider its norm 
$$f(t):=|\nabla x|=|\mu|^{-1}$$ 
along an integral curve of $T_2.$ By the first equation in (\ref{Eq::Condition}) it satisfies
$$(f^2)'=-2\mu^{-3}d\mu(T_2)=2c_1\mu^{-3}=2c_1f^3.$$
Since $f$ does not vanish, this equation implies that the flow of $T_2$ cannot be defined for all $t\in\R,$ and thus that the surface is not complete. If $c_1=0$ and $c_2\neq 0,$  we analogously consider
$$g(t)=|\nabla y|=|\nu|^{-1}$$ 
along an integral curve of $T_1.$ It satisfies $(g^2)'=2c_2g^3,$ which also implies that the surface is not complete.
\end{proof}
\begin{rem}The surface $H^1(r_1)\times S^1(r_2)\subset\R^{1,1}\times\R^2=\R^{1,3}$ with
$$H^1(r_1):=\{(x_0,x_1)\in\R^{1,1}:\ x_0^2-x_1^2=r_1^2,\ x_0>0\},$$ 
more generally a product of two regular and complete curves $\gamma_1\times\gamma_2\subset\R^{1,1}\times\R^2$ ($\gamma_1$ spacelike), is a complete spacelike surface with regular Gauss map and constant angle $\psi=\pi/2\ [\pi].$ In fact, in view of the proposition and of Theorem \ref{thm psi pi sur 2} below, up to a congruence, all the complete surfaces in $\R^{1,3}$ with constant complex angle and regular Gauss map are of that form.
\end{rem}

\section{Characterization of constant angle surfaces which are product of plane curves}\label{section product}

We show here that the surfaces of constant angle $\pi/2\ [\pi]$ are product of plane curves. More precisely, we have the following result:

\begin{thm}\label{thm psi pi sur 2}
A surface has constant angle $\psi=\pi/2\ [\pi]$ with respect to a spacelike plane $p_o$ if and only if it is a product $\gamma_1\times\gamma_2$ of curves in the perpendicular planes $p_o$ and $p_o^{\perp}.$ 
\end{thm}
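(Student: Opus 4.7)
The plan is to use Proposition \ref{prop q p psi} together with the explicit description of unit tangent bivectors to $\mathcal{Q}$ at $p_o$ from Section \ref{section def angle} to force the tangent plane of $M$ at each point to be spanned by one vector in $p_o$ and one in $p_o^{\perp}$, and then to recover the product structure by orthogonal projection onto these two orthogonal planes.

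For the easy direction, I would parametrize $M=\gamma_1\times\gamma_2$ locally as $F(s,t)=\gamma_1(s)+\gamma_2(t)$ with $\gamma_1'(s)\in p_o$ and $\gamma_2'(t)\in p_o^{\perp}$ (spacelike). Writing $p_o=e\wedge e'$ in an orthonormal basis of $p_o$, a direct computation shows $\langle p_o,\gamma_1'\wedge\gamma_2'\rangle=0$ since $\gamma_2'\perp p_o$, and $p_o\wedge\gamma_1'\wedge\gamma_2'=0$ since $\gamma_1'\in p_o$. Thus $H(p_o,G)=0$ by \eqref{H lambda2}, giving $\cos\psi=0$, i.e., $\psi=\pi/2\ [\pi]$.

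For the converse, assume $\cos\psi=0$. By Proposition \ref{prop q p psi} with $\sin\psi=\pm1$, at each $x\in M$ the Gauss map is $G(x)=\pm V(x)$ with $V(x)\in T_{p_o}\mathcal{Q}$ and $H(V,V)=1$. From the discussion preceding \eqref{formule q uv}, such a $V$ is necessarily of the form $u(x)\wedge v(x)$ with $u(x)$ a unit spacelike vector in $p_o$ and $v(x)$ a unit spacelike vector in $p_o^{\perp}$. Consequently, at every point $T_xM=\mathbb{R}u(x)\oplus\mathbb{R}v(x)$ with $u(x)\in p_o$, $v(x)\in p_o^{\perp}$. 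In particular the two transverse line fields $D_1(x):=T_xM\cap p_o=\mathbb{R}u(x)$ and $D_2(x):=T_xM\cap p_o^{\perp}=\mathbb{R}v(x)$ are smooth (the intersections are one-dimensional because $T_xM\neq p_o$, since otherwise $H(p_o,G(x))=1$).

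To extract the product, let $\pi_{\parallel}$ and $\pi_{\perp}$ be the orthogonal projections of $\mathbb{R}^{1,3}$ onto $p_o$ and $p_o^{\perp}$ respectively, and set $f_2:=\pi_{\parallel}|_M$, $f_1:=\pi_{\perp}|_M$. Then $df_2(v(x))=0$, $df_2(u(x))=u(x)$, and symmetrically for $f_1$, so both $f_2$ and $f_1$ have constant rank $1$. By the constant rank theorem, $\gamma_1:=f_2(M)\subset p_o$ and $\gamma_2:=f_1(M)\subset p_o^{\perp}$ are smooth regular curves (locally), and $u(x)$ is tangent to $\gamma_1$ at $f_2(x)$, $v(x)$ tangent to $\gamma_2$ at $f_1(x)$. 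Since $x=f_2(x)+f_1(x)$ for every $x$, the map $x\mapsto(f_2(x),f_1(x))$ sends $M$ into $\gamma_1\times\gamma_2\subset p_o\oplus p_o^{\perp}=\mathbb{R}^{1,3}$, and its differential sends $u(x)\mapsto(u(x),0)$, $v(x)\mapsto(0,v(x))$, hence is an isomorphism onto $T_{(f_2(x),f_1(x))}(\gamma_1\times\gamma_2)$. So $M$ coincides (locally, and globally if connected and maximal) with $\gamma_1\times\gamma_2$.

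The only delicate step is the converse: one must argue that having each tangent plane of the form $u\wedge v$ with $u\in p_o$, $v\in p_o^{\perp}$ is enough to produce the product structure. The integrability of the two line fields $D_1,D_2$ is automatic (they are one-dimensional), but the genuine content is that their integral curves lie in affine planes parallel to $p_o$ and $p_o^{\perp}$ respectively; this is exactly what the rank argument on $\pi_{\parallel}|_M$ and $\pi_{\perp}|_M$ encapsulates, and it is the main technical point of the proof.
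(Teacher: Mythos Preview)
Your proof is correct, and for the easy direction (product $\Rightarrow$ angle $\pi/2\ [\pi]$) it coincides with the paper's argument almost verbatim.

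For the hard direction you take a genuinely different route. The paper specializes its representation machinery: with $\psi_1=\pi/2$, $\psi_2=0$ the constants $c_1,c_2$ in \eqref{c1 c2 psi1 psi2} vanish, so the system \eqref{Eq::Condition} forces $\mu=\mu(x)$, $\nu=\nu(y)$; then the explicit adapted-frame formulas of Section~\ref{section explicit expression frame} reduce to $T_1=(0,0,\sin 2x,\cos 2x)\in p_o$ and $T_2=(-\sinh 2y,\cosh 2y,0,0)\in p_o^{\perp}$, and \eqref{F mu nu Ti} integrates to a sum of two plane curves. Your argument is instead purely geometric: from Proposition~\ref{prop q p psi} and the description of unit vectors of $T_{p_o}\mathcal{Q}$ you obtain directly $T_xM=\R u(x)\oplus\R v(x)$ with $u(x)\in p_o$, $v(x)\in p_o^{\perp}$, and then the constant-rank projections $\pi_\parallel|_M$, $\pi_\perp|_M$ produce the two factor curves. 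This bypasses the representation theorem, the lift $g$, the PDE \eqref{Eq::Condition}, and the explicit frame computations entirely; in particular it does not use the standing assumption of a regular Gauss map. The paper's approach, on the other hand, stays within the framework developed in Sections~\ref{section representation 1}--\ref{section adapted frame} and yields an explicit parametrization of the curves $\gamma_1,\gamma_2$ in the coordinate $z$, which is consistent with the rest of the paper's computations.
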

\begin{proof}
We assume that $p_o=e_3^o\wedge e_4^o.$ Since $\psi_1=\pi/2\ [\pi]$ and $\psi_2=0,$ we have $c_1=c_2=0$ (see (\ref{c1 c2 psi1 psi2})) and (\ref{Eq::Condition}) implies that $\mu$ only depends on $x,$ and $\nu$ only depends on $y;$ moreover, the explicit formulas for $T_1$ and $T_2$ in Section \ref{section explicit expression frame} read
$$T_1=(0,0,\sin 2x,\cos 2x)\hspace{.5cm}\mbox{and}\hspace{.5cm} T_2=(-\sinh 2y,\cosh 2y,0,0).$$  
Formula (\ref{F mu nu Ti}) gives the result. Reciprocally, for a product $\gamma_1\times \gamma_2$ in $\R^2\times\R^{1,1}$ where $\gamma_2$ is a spacelike curve, setting $p_o=\R^2\times\{0\},$ $T_1=\gamma_1'/|\gamma_1'|$ and $T_2=\gamma_2'/|\gamma_2'|$ the angle $\psi$ between the surface $\gamma_1\times \gamma_2$ and $p_o$ is by definition such that
$$\cos\psi=H(p_o,T_1\wedge T_2)=\langle p_o, T_1\wedge T_2\rangle+i\ p_o\wedge T_1\wedge T_2=0;$$
this implies that $\psi=\pi/2\ [\pi].$
\end{proof}
Since surfaces with constant angle $\psi=0\ [\pi]$ were described in Remark \ref{rem psi degenerate hyperplane}, this result completes the description of the surfaces with constant angle $\psi= 0\ [\pi/2].$
\section{Characterization of constant angle surfaces with real or pure imaginary constant angle}\label{section real im angle}
We describe in the following theorem the spacelike surfaces with regular Gauss map and real or pure imaginary constant angle in $\R^{1,3}.$ Let us recall that a \emph{holonomy tube} over a spacelike curve $\gamma\in\R^{1,3}$ is a surface obtained by the normal parallel transport along $\gamma$ of some curve $c$ initially given in a fixed hyperplane normal to $\gamma:$ $c$ is the curve of the \emph{starting points} of the tube.
\begin{thm}
A surface with real constant angle $\psi=\psi_1\in\R$ (resp. pure imaginary constant angle $\psi=i\psi_2\in i\R$) with respect to a spacelike plane $p_o$ is a holonomy tube over a plane curve $\gamma\in p_o.$ Moreover, if $c\in N_{m_o}\gamma$ is the curve of the starting points of the tube, then $c$ is an helix curve in $N_{m_o}\gamma\simeq\R^{1,2}$ with respect to a spacelike (resp. timelike) direction.
\end{thm}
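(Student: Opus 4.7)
The plan is to combine the PDE simplifications coming from the assumption $\psi\in\R$ or $\psi\in i\R$ with the explicit frame formulas of Section \ref{section explicit expression frame}, integrate the representation of Theorem \ref{thm representation 2} directly, and recognise $F$ as a holonomy tube. I will treat the real case $\psi=\psi_1\in\R$ in detail; the pure imaginary case is handled by the symmetric argument obtained by exchanging the roles of $x$ and $y$ (and the causal character of the varying unit vector in $p_o^\perp$).

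By (\ref{c1 c2 psi1 psi2}), $\psi_2=0$ is equivalent to $c_2=0$; the system (\ref{Eq::Condition}) then gives $\nu=\nu(y)$ and $\mu(x,y)=M(x)+N(y)$ for suitable real functions $M,N$. Substituting $\psi_2=0$ into the formulas of Section \ref{section explicit expression frame}, one checks that $T_1(x)\in p_o$ depends only on $x$, and that $T_2$ admits the decomposition
\[
T_2(x,y)=\sin\psi_1\,w(y)-\cos\psi_1\,v(x),
\]
where $v(x)\in p_o$ is the unit vector normal to $T_1(x)$ in $p_o$ and $w(y)\in p_o^\perp$ is a unit spacelike vector tracing a hyperbola in the timelike plane $p_o^\perp$.

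Setting $\gamma(x):=F(x,0)$, one has $\gamma'(x)=\mu(x,0)T_1(x)\in p_o$, so $\gamma$ is, up to translation, a plane curve in $p_o$. A direct differentiation gives $v'(x)\parallel T_1(x)$, so $(v(x),e_0^o,e_1^o)$ is a normal-parallel orthonormal frame along $\gamma$. Integrating the exact form $dF=\mu T_1\,dx+\nu T_2\,dy$ along the path $(0,0)\to(x,0)\to(x,y)$ then yields
\[
F(x,y)=F(0,0)+\gamma(x)-\cos\psi_1\!\left(\int_0^y\!\nu(t)\,dt\right)v(x)+\sin\psi_1\!\int_0^y\!\nu(t)\,w(t)\,dt,
\]
which is exactly $\gamma(x)$ plus the parallel transport along $\gamma$ of the vector $c(y)-m_o\in N_{m_o}\gamma$, where $m_o:=\gamma(0)$ and $c(y):=F(0,y)$. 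This exhibits $F$ as the holonomy tube over $\gamma$ with starting curve $c$. The unit tangent of $c$ reads $T_c=\varepsilon\bigl(-\cos\psi_1\,v(0)+\sin\psi_1\,w(y)\bigr)$ with $\varepsilon=\mathrm{sgn}(\nu)$, so $\langle T_c,v(0)\rangle=-\varepsilon\cos\psi_1$ is constant; since $v(0)$ is spacelike, $c$ is a helix in $N_{m_o}\gamma\simeq\R^{1,2}$ with respect to a spacelike direction. In the pure imaginary case, the analogous integration produces a holonomy tube over a plane curve in $p_o$ whose starting curve has tangent of the form $\cosh\psi_2\,\tilde u(0)-\sinh\psi_2\,\tilde w(x)$, where now $\tilde w(x)\in p_o^\perp$ is a unit \emph{timelike} vector tracing a timelike hyperbola, giving the helix with respect to a timelike direction stated in the theorem.

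The main obstacle I anticipate is the clean matching of the explicit integral expression for $F$ with the intrinsic definition of a holonomy tube, i.e.\ identifying the correct normal-parallel frame along $\gamma$ and reading off the starting curve together with the causal character of its axis in each of the two cases.
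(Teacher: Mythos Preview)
Your argument is correct and follows essentially the same strategy as the paper: both use that $\psi_2=0$ forces $c_2=0$, plug this into the explicit frame of Section \ref{section explicit expression frame} to see that $T_1$ depends only on $x$ and lies in $p_o$, and then exploit this to exhibit the holonomy tube structure and the helix property of the starting curve. The only difference is one of presentation: you explicitly integrate $dF=\mu T_1\,dx+\nu T_2\,dy$ and build a parallel normal frame $(v(x),e_0^o,e_1^o)$ to read off the tube, whereas the paper bypasses the integral and simply checks that for each fixed $y_1$ the section $x\mapsto F(x,y_1)-\gamma(x)$ is parallel by observing $\partial_xF(x,y_1)=\mu(x,y_1)T_1(x)$ is tangent to $\gamma$. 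Two small points: in your displayed formula the term $F(0,0)$ is redundant since $\gamma(0)=F(0,0)$; and in the pure imaginary case your conclusion that the axis is timelike is not justified by the decomposition you wrote, since the vector with constant inner product against the tangent is the spacelike $\tilde u(0)$, not $\tilde w(x)$ --- the paper is equally brief here, merely asserting the analogous statement.
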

\begin{proof}
Let us fix $m_o=(x_o,y_o)\in\mathcal{U}\subset\R^2$ and consider the curves $\gamma(x):=F(x,y_o)$ and $c(y):=F(x_o,y).$ We assume that $p_o$ is the plane $\{0\}\times\R^2\subset\R^{1,1}\times\R^2=\R^{1,3}.$ The curve $\gamma$ is everywhere tangent to 
$$T_1=(0,0,\sin\left(\frac{2x}{\sin\psi_1}\right),\cos\left(\frac{2x}{\sin\psi_1}\right)),$$
by the expression of $T_1$ with $\psi_2=0$ in Section \ref{section explicit expression frame}, and is thus a curve in $p_o.$ The curve $c$ belongs to the hyperplane normal to the curve $\gamma$ at $x_o.$ Indeed, 
\begin{eqnarray*}
\langle c(y)-\gamma(x_o),\gamma'(x_o)\rangle&=&\langle F(x_o,y)-F(x_o,y_o),\partial_xF(x_o,y_o)\rangle\\
&=&\langle\int_{y_o}^y\partial_yF(x_o,t)dt,\partial_xF(x_o,y_o)\rangle\\
&=&\int_{y_o}^y\mu(x_o,y_o)\nu(x_o,t)\langle T_2(x_o,t),T_1(x_o,y_o)\rangle,
\end{eqnarray*}
which is zero since $T_1(x_o,y_o)=T_1(x_o,t)$ is orthogonal to $T_2(x_o,t)$ ($T_1(x_o,t)$ does not depend on $t,$ by the expression of $T_1$ above). Finally, if we fix $y=y_1,$ the curve $x\mapsto F(x,y_1)$ may be regarded as a normal section of $\gamma;$ it is parallel since
$$\partial_xF(x,y_1)=\mu(x,y_1)T_1(x,y_1)=\mu(x,y_1)T_1(x,y_o)$$
is tangent to $\gamma$ at $x.$ The curve $c$ is an helix in $N_{m_o}\gamma:$ its unit tangent is 
$$y\mapsto T_2(x_o,y)=(\sin\psi_1\sinh\varphi_2,\sin\psi_1\cosh\varphi_2,-\cos\psi_1\cos\varphi_1,\cos\psi_1\sin\varphi_1),$$ 
and if $\vec{A}$ is the fixed direction $(0,0,-\cos\varphi_1,\sin\varphi_1)$ in  $N_{m_o}\gamma$ we have
$$\langle T_2(x_o,y),\vec{A}\rangle=\cos\psi_1.$$
The curve $c$ is thus a constant angle curve in $\R^{1,2}$ with respect to the spacelike direction $\vec{A};$ the constant  angle is $\psi_1.$ The proof for $\psi=i\psi_2\in\ i\R$ is analogous. 
\end{proof}

\section{Characterization of constant angle surfaces in hyperspheres and lightcones}\label{section spheres lightcones}
\subsection{The immersion in the adapted orthonormal frame }
Let us write the immersion of a constant angle surface in the form 
\begin{equation}\label{F fi}
\bfz=f\ T_1+\tilde{f}\ T_2+g\ N_1+\tilde{g}\ N_2
\end{equation}
where $(T_1,T_2,N_1,N_2)$ is the frame adapted to the surface introduced in Section \ref{section adapted frame} and $f,\tilde{f},g$ and $\tilde{g}$ are smooth real functions of the variables $x$ and $y.$ Formulas (\ref{nabla Ti}) and (\ref{ff Ti}) imply the following:
\begin{thm}\label{Th::CoordintateForm}
We assume that $\psi_1\neq 0\ [\pi/2]$ and $\psi_2\neq 0$ (i.e. $c_1,c_2\neq 0$) and suppose that $\mu$ and $\nu$ are solutions of \eqref{Eq::Condition}. Then the immersion reads
\begin{equation}\label{F f ft g gt}
\bfz=f\ T_1 +\frac{\partial_y f}{c_2}\ T_2+g\ N_1+\frac{\partial_y g}{c_1}\ N_2
\end{equation}
where $f$ and $g$ are solutions of
\begin{equation}\label{EQ::Systemfi}
\left\lbrace
\begin{array}{rlrl}
\partial_x f&= \mu + \dfrac{(4+c_1^2) g -\partial^2_{yy} g}{2},  &
\partial_y f&= \dfrac{c_2 (c_1^2 g- \partial^2_{yy}g)}{2c_1},  \\[12pt]
\partial_x g&= -\dfrac{c_2}{2}\nu + \dfrac{(c_2^2-4)f + \partial^2_{yy} f}{2}, &
\partial_y g&=\dfrac{c_1}{2}\nu-\dfrac{c_1(c_2^2 f+\partial^2_{yy}f)}{2 c_2}, \\[12pt]
\partial^2_{xy} f &=-c_1 c_2 f,   &\partial^2_{xy} g &=-c_1c_2 g.
\end{array}
\right.
\end{equation}
Reciprocally, given two functions $f,g$ solving this PDE system for $\mu$ and $\nu$ solutions of \eqref{Eq::Condition}, the immersion $\bfz$ given by \eqref{F f ft g gt} is a spacelike immersion in $\R^{1,3}$ of constant complex angle.
\end{thm}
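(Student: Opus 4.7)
The plan is to substitute the ansatz \eqref{F fi} into the two basic identities $\partial_xF=\mu T_1$ and $\partial_yF=\nu T_2$, which hold because $(T_1,T_2)$ is by construction the orthonormal tangent frame of the immersion whose metric is $\mu^2dx^2+\nu^2dy^2$. Differentiating \eqref{F fi} and expanding $\partial_xT_i$, $\partial_yT_i$, $\partial_xN_i$, $\partial_yN_i$ by means of the frame formulas \eqref{formulas dT}--\eqref{formulas dN}, one obtains eight scalar identities by reading off the coefficients along $T_1,T_2,N_1,N_2$ in each of the two vector equalities. The theorem amounts to rearranging these identities.

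For the forward direction, the $T_1$ and $N_1$ components of $\partial_yF=\nu T_2$ reduce to $\partial_yf=c_2\tilde f$ and $\partial_yg=c_1\tilde g$, which (using $c_1,c_2\neq 0$) immediately fix $\tilde f=\partial_yf/c_2$ and $\tilde g=\partial_yg/c_1$. Substituting these expressions into the $T_2$ and $N_2$ components of the same equality yields the formulas for $\partial_yf$ and $\partial_yg$ in the system, i.e.\ the two equations of the second column. The $T_1$ and $N_1$ components of $\partial_xF=\mu T_1$ then give the expressions for $\partial_xf$ and $\partial_xg$, after eliminating $\partial_yf/c_2$ and $\partial_yg/c_1$ by means of the $\partial_y$-equations already obtained. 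Finally, the $T_2$ and $N_2$ components of $\partial_xF=\mu T_1$ produce $c_1f+\partial_{xy}^2f/c_2=0$ and $c_2g+\partial_{xy}^2g/c_1=0$, which are the mixed-derivative equations on the bottom row of the system.

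For the converse, I define $F$ by \eqref{F f ft g gt} and compute $dF$ from the same frame formulas. The computation above is entirely reversible: given the six equations of the PDE system (together with \eqref{Eq::Condition} for $\mu,\nu$), the four $\partial_y$-components of $\partial_yF$ collapse to $\nu T_2$ (the $T_1$ and $N_1$ coefficients vanish automatically once $\tilde f$ and $\tilde g$ are defined by the prescribed formulas, and the $T_2,N_2$ coefficients reproduce $\nu$ and $0$ by the second and fourth equations), and the four $\partial_x$-components of $\partial_xF$ collapse to $\mu T_1$ (the $T_1,N_1$ coefficients are the first and third equations, and the $T_2,N_2$ coefficients vanish by the mixed-derivative equations). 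Thus $F$ is a spacelike immersion with first fundamental form $\mu^2dx^2+\nu^2dy^2$ and with Gauss map $T_1\wedge T_2$, which by the explicit construction of Section \ref{section adapted frame} coincides with the prescribed $G$ of \eqref{G function z}; by Definition \ref{def complex angle} it therefore has constant complex angle $\psi$ with respect to $p_o=E_1$.

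The work is essentially book-keeping, and the one point deserving care is the role of the two mixed-derivative equations. These are not redundant consequences of the other four: they appear precisely as the conditions that the $T_2$ and $N_2$ components of $\partial_xF$ vanish, so that omitting them would leave spurious contributions to $\partial_xF$ outside the direction $T_1$. This explains why the system contains six equations despite having only two unknowns $f,g$: four govern the first partial derivatives of $f$ and $g$ in terms of $(f,g,\mu,\nu)$ and the $y$-derivatives of $f,g$, while the last two are the compatibility constraints forced by the geometry of the adapted frame.
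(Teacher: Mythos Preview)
Your argument is correct and essentially identical to the paper's: both compute $dF$ against the frame formulas \eqref{formulas dT}--\eqref{formulas dN} and match the coefficients with $\mu T_1\,dx+\nu T_2\,dy$ (the paper does this by differentiating the inner products $\langle F,T_i\rangle,\langle F,N_i\rangle$, you do it by expanding $\partial_xF,\partial_yF$ directly, which is the same calculation viewed dually). One small refinement to your closing remark: while the two mixed-derivative equations are indeed not joint consequences of the first four alone, the paper observes that once five of the six equations are assumed the remaining one follows, so the system is only apparently overdetermined.
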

The advantage of this formulation lies in the fact that the solutions of this system directly give the immersion; in the previous formulations, Theorems \ref{thm representation} and \ref{thm representation 2}, a last integration was still required to obtain the immersion $F$ from the 1-form $\xi.$

\begin{proof}
If $F$ is an immersion of constant angle $\psi$ and metric $\mu^2 dx^2+\nu^2dy^2,$ we define the functions 
\[
f=\langle F, T_1 \rangle, \quad
\tilde{f}=\langle F, T_2 \rangle, \quad
g=\langle F, N_1 \rangle \quad \text{and} \quad
\tilde{g}=-\langle F, N_2 \rangle; \quad
\]
they are such that (\ref{F fi}) holds. Using  (\ref{nabla Ti}) and (\ref{ff Ti}) we compute
\begin{eqnarray*}
df&=&\langle dF,T_1\rangle+\langle F,dT_1\rangle\\
&=& \langle \mu T_1 dx+\nu T_2 dy,T_1\rangle+\langle F,(c_1dx+c_2dy)T_2+2N_1dx\rangle\\
&=&(\mu+c_1\tilde{f}+2g)dx+c_2\tilde{f}dy
\end{eqnarray*}
and similarly
$$d\tilde{f}=-c_1 fdx+(\nu-c_2 f-2\tilde{g})dy,\ dg=(-2 f-c_2 \tilde{g})dx+c_1\tilde{g} dy$$
and
$$d\tilde{g}= -c_2 g dx+(c_1g-2 \tilde{f})dy.$$
This implies that $\tilde{f}=\frac{\partial_y f}{c_2}$, $\tilde{g}=\frac{\partial_y g}{c_1}$ and $f,g$ satisfy \eqref{EQ::Systemfi}.

Reciprocally, if $f,g$ are solutions of (\ref{EQ::Systemfi}), straightforward computations using (\ref{formulas dT}) and (\ref{formulas dN}) show that the function $F$ defined by (\ref{F f ft g gt}) satisfies $\partial_xF=\mu T_1$ and $\partial_yF=\nu T_2;$ it is thus an immersion with Gauss map $G=T_1\wedge T_2,$ and, since $H(I,G)=\cos\psi$ (by the explicit formulas for $T_1$ and $T_2$ in Section \ref{section explicit expression frame}), it is of constant angle $\psi$ with respect to $p_0:=I.$
\end{proof}
\begin{rem}
The system \eqref{EQ::Systemfi} is in fact equivalent to the smaller system formed by the first four equations and one of the last two equations: the sixth equation may indeed be easily obtained from these five equations.
\end{rem}

\subsection{Description of the constant angle surfaces in hyperspheres and lightcones}
We determine here the constant angle surfaces in hyperspheres and lightcones, i.e., up to translations, immersions of constant angle and constant norm.
\begin{cor}\label{cor sphere lightcone}
Keeping the notation introduced above, we have the following:
\\1. The constant angle immersion $F$ belongs to a hypersphere if and only if 
$$\mu=  r_1 e^{c_1 y-c_2 x}+ r_2 e^{-(c_1 y-c_2 x)}$$
and
$$\nu=-r_1 e^{c_1 y-c_2 x}+ r_2 e^{-(c_1 y-c_2 x)}$$
for some constants $r_1,r_2 \neq 0$. In that case, and up to a translation, we have $|\bfz|^2= r_1 r_2.$
\\
\\2.  The constant angle immersion $F$ belongs to a lightcone if and only if 
\begin{equation}\label{mu lightcone}
\mu= r e^{\epsilon(c_1 y-c_2x)}\hspace{.5cm}\mbox{and}\hspace{.5cm}\nu= -\epsilon r e^{\epsilon(c_1 y-c_2x)}
\end{equation}
for some constant $r \neq 0$ and $\epsilon=\pm 1$.
\\
\\Moreover, if the immersion belongs to a hypersphere or a lightcone it can be written, up to a translation, in the form
\[
\bfz =\dfrac{-\mu N_1 + \nu N_2}{2} 
\]
where $N_1,$ $N_2\in\R^{1,3}$ are the unit orthogonal vector fields introduced in Section \ref{section adapted frame}.
\end{cor}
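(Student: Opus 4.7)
The plan is to exploit the coordinate representation \eqref{F f ft g gt} and the PDE system \eqref{EQ::Systemfi} from Theorem \ref{Th::CoordintateForm}. After a translation that places the center of the hypersphere (resp.\ the vertex of the lightcone) at the origin, the hypothesis becomes $|\bfz|^2$ constant, so that $\langle \bfz,\partial_x\bfz\rangle=\langle \bfz,\partial_y\bfz\rangle=0$. Since $\partial_x\bfz=\mu T_1$ and $\partial_y\bfz=\nu T_2$ with $\mu,\nu$ nowhere zero, and $T_1,T_2$ are orthonormal and orthogonal to $N_1,N_2$, this gives $f=\langle\bfz,T_1\rangle=0$ and $\tilde f=\langle\bfz,T_2\rangle=\partial_yf/c_2=0$.

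I then substitute $f\equiv 0$ into \eqref{EQ::Systemfi}. The first two equations reduce to $\mu=(\partial_{yy}^2 g-(4+c_1^2)g)/2$ and $\partial_{yy}^2 g=c_1^2 g$, which together force $g=-\mu/2$ and $\partial_{yy}^2\mu=c_1^2\mu$. Similarly, the last two equations become $\partial_xg=-c_2\nu/2$ and $\partial_yg=c_1\nu/2$; using $g=-\mu/2$ together with \eqref{Eq::Condition} this is equivalent to the single extra first-order condition $c_1\partial_x\mu+c_2\partial_y\mu=0$, whose general solution is $\mu=F(c_2x-c_1y)$. Combining with $\partial_{yy}^2\mu=c_1^2\mu$ yields $F''=F$ and hence
\[
\mu = r_1 e^{c_1y-c_2x}+r_2 e^{-(c_1y-c_2x)},
\]
and then $\nu=-\partial_y\mu/c_1$ gives the stated formula for $\nu$. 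A direct computation of $\tilde g=\partial_y g/c_1=-\partial_y\mu/(2c_1)=\nu/2$ shows that, in the adapted frame,
\[
\bfz = -\tfrac{\mu}{2}N_1+\tfrac{\nu}{2}N_2,
\]
which is the announced formula. Using the timelike character of $N_2$ I then compute $|\bfz|^2=(\mu^2-\nu^2)/4=r_1r_2$ by direct expansion, confirming the hypersphere statement. For the lightcone case one adds the constraint $|\bfz|^2=0$, i.e.\ $r_1r_2=0$; setting $r_2=0$ (resp.\ $r_1=0$) yields the two families described with $\epsilon=1$ (resp.\ $\epsilon=-1$).

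For the converse direction, starting from the proposed formulas for $\mu,\nu$ one checks that they satisfy \eqref{Eq::Condition}, then defines $f\equiv 0$ and $g:=-\mu/2$ and verifies directly that the six equations of \eqref{EQ::Systemfi} hold (the exponential structure makes $\partial_{yy}^2g=c_1^2 g$ and $\partial_{xy}^2g=-c_1c_2 g$ immediate). By Theorem \ref{Th::CoordintateForm} the resulting map $\bfz=-\tfrac{\mu}{2}N_1+\tfrac{\nu}{2}N_2$ is a spacelike immersion with the prescribed constant angle, and a final computation gives $|\bfz|^2=r_1r_2$ in the hypersphere case and $|\bfz|^2=0$ in the lightcone case.

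The main obstacle is the algebraic bookkeeping in reducing \eqref{EQ::Systemfi} under the single assumption $f=\tilde f=0$: one must recognize that, once $g=-\mu/2$ is extracted, the remaining equations are not an overdetermined constraint but exactly the first-order PDE $c_1\partial_x\mu+c_2\partial_y\mu=0$ compatible with \eqref{Eq::Condition}. After this observation, the explicit form of $\mu,\nu$ follows from a two-line ODE, and the identity $\mu^2-\nu^2=4r_1r_2$ makes the hypersphere/lightcone dichotomy transparent.
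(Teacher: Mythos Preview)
Your proof is correct and follows essentially the same route as the paper: translate so that $|\bfz|^2$ is constant, deduce $f=0$ from $\langle\bfz,\partial_x\bfz\rangle=\langle\bfz,\partial_y\bfz\rangle=0$, reduce \eqref{EQ::Systemfi} to obtain $g=-\mu/2$ and an ODE for $\mu$, and read off the exponential form. Your extraction of $g=-\mu/2$ directly from the first two equations of \eqref{EQ::Systemfi} (combining $\partial_{yy}^2 g=c_1^2 g$ with $0=\mu+\tfrac{(4+c_1^2)g-\partial_{yy}^2 g}{2}$) is in fact slightly cleaner than the paper's version, which integrates $\partial_y g=-\tfrac12\partial_y\mu$ and then uses the mixed equation $\partial^2_{xy}g=-c_1c_2 g$ to kill the integration function; your repackaging of the remaining constraint as the transport equation $c_1\partial_x\mu+c_2\partial_y\mu=0$ is equivalent to the paper's list $\partial_x\mu=c_2\nu,\ \partial_y\mu=-c_1\nu$.
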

\begin{proof} 
From Theorem \ref{Th::CoordintateForm}, we can write the immersion $F$ in the form 
\[
\bfz=f\ T_1 +\frac{\partial_y f}{c_2}\ T_2+g\ N_1+\frac{\partial_y g}{c_1}\ N_2
\]
with $f$ and $g$ solutions of \eqref{EQ::Systemfi}. But $F$ has constant norm if and only if $\langle \bfz , \bfz_x \rangle=\langle \bfz , \bfz_y\rangle=0$, that is $f=0$. The function $g$ is thus a solution of
\[
\left\lbrace
\begin{array}{c}
0= \mu + \dfrac{(4+c_1^2) g -\partial^2_{yy} g}{2},  
\qquad 0= c_1^2 g- \partial^2_{yy}g,  \\[12pt]
\partial_x g= -\dfrac{c_2}{2}\nu, \qquad
\partial_y g=\dfrac{c_1}{2}\nu,\qquad  \partial^2_{xy} g =-c_1c_2 g. 
\end{array}
\right.
\]
Using \eqref{Eq::Condition} we have
\[
\partial_y g = \dfrac{c_1}{2}\nu= -\dfrac{\partial_y\mu}{2}
\]
and obtain that
$$g(x, y) =-\dfrac{\mu(x, y)}{2}+ t(y)$$ 
for some function $t;$ but the last condition now reads
\[
\partial^2_{xy}g =-\dfrac{1}{2}\partial^2_{xy}\mu=\dfrac{1}{2}c_1 c_2 \mu -c_1 c_2 t,
\]
and since $\partial^2_{xy}\mu=-c_1 c_2 \mu$ we get $t=0$. With $g=-\dfrac{\mu}{2}$, the system \eqref{EQ::Systemfi}, without duplicities, reads
\[
\begin{array}{c}  
\qquad \partial^2_{yy}\mu=c_1^2 \mu,  \qquad
\partial_x\mu= c_2 \nu, \qquad
\partial_y\mu =-c_1\nu, \qquad
 \partial^2_{xy} \mu =-c_1c_2 \mu
\end{array}
\]
whose solutions are 
$$\mu= r_1 e^{c_1 y-c_2 x}+  r_2 e^{-(c_1 y-c_2 x)}\qquad \text{and} \qquad \nu= -r_1 e^{c_1 y-c_2 x}+ r_2 e^{-(c_1 y-c_2 x)}$$
where $r_1$ and $r_2$ are real numbers. We thus obtain
\[
\bfz=gN_1+\frac{\partial_yg}{c_1}N_2=-\dfrac{\mu}{2}N_1-\dfrac{\partial_y\mu}{2c_1}N_2=\dfrac{-\mu N_1 + \nu N_2}{2} 
\]
whose norm is
\[
|\bfz|^2=\dfrac{\mu^2-\nu^2}{4}=r_1 r_2.
\]
Finally, the immersion belongs to the lightcone at the origin if and only if $r_1$ or $r_2=0,$ which implies the last claim in the statement.
\end{proof}
\begin{rem}
The second part of the corollary in particular shows that the surfaces in (\ref{F new example}) are, up to a congruence and scaling, the unique surfaces with constant angle in a lightcone.
\end{rem}

\begin{cor} Let us assume that $M$ is a spacelike surface of constant angle $\psi\neq 0\ [\pi/2].$ Then the following properties are equivalent:
\begin{enumerate}
\renewcommand{\theenumi}{\alph{enumi}} \renewcommand{\labelenumi}{\theenumi)}
\item \label{H par pty 1} $\vec{H}\in\Gamma(NM)$ is parallel;
\item \label{H par pty 2} $\vec{H}$ is everywhere lightlike;
\item \label{H par pty 3} the parameter $z=x+iy\in\mathcal{U}$ is conformal, i.e. $\mu=\nu$ on $\mathcal{U}.$
\end{enumerate}
If one of these properties occurs then the surface belongs to a lightcone and is, up to a congruence, of the form (\ref{F new example}). 
\end{cor}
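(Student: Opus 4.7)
The plan is to prove the cyclic chain (a) $\Leftrightarrow$ (c) $\Leftrightarrow$ (b), and then apply the Corollary \ref{cor sphere lightcone} and the Remark following it to conclude.

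First I would handle (b) $\Leftrightarrow$ (c), which is immediate from (\ref{H mu nu}): the identity $|\vec H|^2=\frac{1}{\mu^2}-\frac{1}{\nu^2}$ vanishes everywhere if and only if $\mu^2=\nu^2$, which (taking both $\mu,\nu$ of the same sign, as we may) is the condition $\mu=\nu$ that the parameter $z=x+iy$ be conformal for the metric $\mu^2dx^2+\nu^2dy^2$.

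Next, for (a) $\Leftrightarrow$ (c), I would compute $\nabla'\vec H$ using the expression $\vec H=\frac{1}{\mu}N_1+\frac{1}{\nu}N_2$ together with the normal connection formulas (\ref{nablap Ni}). A direct computation gives
\begin{equation*}
\nabla'\vec H=\left(d\tfrac{1}{\mu}+\tfrac{1}{\nu}(c_2dx-c_1dy)\right)N_1+\left(d\tfrac{1}{\nu}+\tfrac{1}{\mu}(c_2dx-c_1dy)\right)N_2.
\end{equation*}
Expanding the differentials and eliminating $\partial_y\mu$ and $\partial_x\nu$ via (\ref{Eq::Condition}), the $dy$-component of the coefficient of $N_1$ becomes $c_1\frac{\nu^2-\mu^2}{\mu^2\nu}$ and the $dx$-component of the coefficient of $N_2$ becomes $c_2\frac{\nu^2-\mu^2}{\mu\nu^2}$. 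Since $\psi\neq 0\ [\pi/2]$ forces $c_1\neq 0$ or $c_2\neq 0$, the vanishing of $\nabla'\vec H$ implies $\mu^2=\nu^2$, giving (a) $\Rightarrow$ (c). Conversely, if $\mu=\nu$, then (\ref{Eq::Condition}) reduces to $\partial_x\mu=c_2\mu$ and $\partial_y\mu=-c_1\mu$, and substitution back into the above expression for $\nabla'\vec H$ shows each coefficient vanishes identically, giving (c) $\Rightarrow$ (a).

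Finally, once $\mu=\nu$ is established, this is exactly the lightcone case in Corollary \ref{cor sphere lightcone} with $\epsilon=-1$ (the solutions are $\mu=\nu=re^{c_2x-c_1y}$), so the surface lies in a lightcone. The Remark immediately following Corollary \ref{cor sphere lightcone} then identifies, up to congruence and scaling, the surfaces in a lightcone with those of the form (\ref{F new example}), which completes the proof. The only slightly delicate point in the whole argument is keeping careful track of signs when reducing $\nabla'\vec H=0$ to $\mu^2=\nu^2$; all the other steps are routine algebra.
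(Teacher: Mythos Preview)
Your argument is correct, and the overall structure matches the paper's: (b) $\Leftrightarrow$ (c) via (\ref{H mu nu}), then the lightcone conclusion via (\ref{Eq::Condition}) and Corollary~\ref{cor sphere lightcone}. The difference lies in how the link with (a) is established. The paper proves (a) $\Leftrightarrow$ (b) by going back to the components $h_0,h_1$ of $\vec H$ in a \emph{parallel} normal frame (as in Theorem~\ref{thm representation}): using (\ref{mu nu h0 h1}), the Cauchy--Riemann relations for $\beta=u+iv$, and (\ref{mu nu ux uy}), it derives the identity
\[
2|\vec H|^2\cot\psi=-\tfrac{1}{\nu}(\partial_yh_0\sinh v+\partial_yh_1\cosh v)+\tfrac{i}{\mu}(\partial_xh_0\cosh v+\partial_xh_1\sinh v),
\]
so that $\vec H$ parallel (i.e.\ $h_0,h_1$ constant) is equivalent to $|\vec H|^2=0$ because $\cot\psi\neq 0$. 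You instead prove (a) $\Leftrightarrow$ (c) by computing $\nabla'\vec H$ directly in the adapted (non-parallel) frame $(N_1,N_2)$ via (\ref{nablap Ni}). Your route is a bit more self-contained, using only the material of Section~\ref{section adapted frame} and avoiding the auxiliary functions $h_0,h_1,v$; the paper's route, on the other hand, makes transparent \emph{why} parallelism forces $|\vec H|^2=0$ through the single scalar identity above. One small point: $\mu^2=\nu^2$ allows $\mu=-\nu$ as well (the case $\epsilon=+1$ in (\ref{mu lightcone})); your reduction to $\mu=\nu$ by a choice of sign is legitimate, but it is worth noting explicitly that both sign cases are covered by Corollary~\ref{cor sphere lightcone}.
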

\begin{proof}
Since $\beta=u+iv$ is an holomorphic function and by (\ref{mu nu ux uy}) we have
$$\partial_yv=\partial_xu=-\frac{1}{\nu}\partial_y\mu.$$
Recalling (\ref{mu nu h0 h1}) we deduce that
\begin{eqnarray*}
-\frac{\partial_y\mu}{\mu^2}&=&\partial_yh_0\sinh v+\partial_yh_1\cosh v+(h_0\cosh v+h_1\sinh v)\partial_yv\\
&=&\partial_yh_0\sinh v+\partial_yh_1\cosh v-\frac{1}{\nu^2}\partial_y\mu
\end{eqnarray*}
and using (\ref{H mu nu}) that
\begin{equation}\label{pmu ph}
-|\vec{H}|^2\partial_y\mu=\partial_yh_0\sinh v+\partial_yh_1\cosh v.
\end{equation}
We similarly have
\begin{eqnarray*}
-\frac{\partial_x\nu}{\nu^2}&=&\partial_xh_0\sinh v+\partial_xh_1\cosh v+(h_0\cosh v+h_1\sinh v)\partial_xv\\
&=&\partial_xh_0\sinh v+\partial_xh_1\cosh v-\frac{1}{\mu^2}\partial_x\nu
\end{eqnarray*}
and thus
\begin{equation}\label{pnu ph}
|\vec{H}|^2\partial_x\nu=\partial_xh_0\cosh v+\partial_xh_1\sinh v.
\end{equation}
Since, by (\ref{mu nu ux uy}) and (\ref{ux uy psi}), we have
$$\frac{1}{\nu}\partial_y\mu+i\frac{1}{\mu}\partial_x\nu=-\partial_xu+i\partial_yu=2\cot\psi,$$
(\ref{pmu ph}) and (\ref{pnu ph}) yield
$$ 2|\vec{H}|^2\cot\psi =-\frac{1}{\nu}(\partial_yh_0\sinh v+\partial_yh_1\cosh v)+i\frac{1}{\mu}(\partial_xh_0\cosh v+\partial_xh_1\sinh v).$$
Since $h_0$ and $h_1$ are the components of the mean curvature vector $\vec{H}$ in a parallel frame normal to the surface, this formula implies the equivalence between (\ref{H par pty 1}) and (\ref{H par pty 2}). The equivalence between (\ref{H par pty 2}) and (\ref{H par pty 3}) is a consequence of (\ref{H mu nu}). If now the properties (\ref{H par pty 1})-(\ref{H par pty 3}) hold then $\mu=\nu$ and the system (\ref{Eq::Condition}) easily implies that $\mu,\nu$ are of the form (\ref{mu lightcone}). Corollary \ref{cor sphere lightcone} then gives the result.
\end{proof}

\subsection{Examples}
We now use the previous results to give new simple explicit examples of constant angle surfaces. Since examples of constant angle surfaces in lightcones were already given in Section \ref{section new example}, we will focus on examples in hyperspheres, and on examples which do not belong to hyperspheres nor to lightcones. All the examples arise from the application of Theorem \ref{Th::CoordintateForm} and its first corollary: we start with a given solution $\mu$ of \eqref{PDE-A}, we then find two functions $f,g$ solving \eqref{EQ::Systemfi}, and we finally obtain the immersion $F$ from the general form \eqref{F f ft g gt}. For the sake of brevity we only write the final expressions.
\subsubsection{Immersions in hyperspheres: $\mu= 2\sinh (c_1 y-c_2 x)$}
As proved in Corollary \ref{cor sphere lightcone}, the metric coefficient $\mu$ has to be of the form 
\[
\mu=  r_1 e^{c_1 y-c_2 x}+ r_2 e^{-(c_1 y-c_2 x)}.
\]
If we take $r_1=-r_2=1$ we get $\mu= 2\sinh (c_1 y-c_2 x)$ and then
$\nu=2\cosh (c_2 x+c_1 y)$. We assume that $(x,y)\in\R^2$ is such that $c_1 y\neq c_2 x,$ so that $\mu\neq 0.$
The immersion is thus
\[
\bfz=-\sinh (c_1 y-c_2 x)N_1+\cosh (c_1 y-c_2 x)N_2,
\]
whose norm is $|F|^2=-1$.

\subsubsection{Immersions which are not in hyperspheres nor in lightcones: $\mu= \sin (c_1 y+c_2 x)$} For this $\mu$, solution to \eqref{PDE-A}, we have $\nu=-\cos (c_2 x+c_1 y).$
Assuming that $\sin (c_1 y+c_2 x)\neq 0$ and  $\cos (c_2 x+c_1 y)\neq 0$ (for suitable $(x,y)$ in $\R^2$), the functions $f$ and $g$ are
\[
f(x,y)=\cos (c_2 x+c_1 y), \qquad g(x,y)=-\sin (c_2 x+c_1 y).
\]
Thus, the immersion reads
\[
\begin{array}{rl}
\bfz&=\cos (c_2 x+c_1 y)c_2 T_1 -\sin (c_2 x+c_1 y)c_1 T_2-\sin (c_2 x+c_1 y) N_1 -\cos (c_2 x+c_1 y) N_2
\\
&=-c_2 \nu T_1 -c_1 \mu T_2-\mu N_1 +\nu N_2,
\end{array}
\]
whose norm is
\[
|\bfz|^2=(c_1^2+1)\nu^2+(c_2^2-1)\mu^2=(c_1^2+1)\cos^2 (c_2 x+c_1 y)+(c_2^2-1)\sin^2 (c_2 x+c_1 y).
\]

\subsubsection{Another example: $\mu=(c_1 y - c_2 x)( \sin (c_1 y+c_2 x)+\cos (c_1 y+c_2 x))$}
For this $\mu$ we get
\[
\nu=(1-c_1 y + c_2 x)\cos (c_1 y+c_2 x)-(1+c_1 y - c_2 x) \sin (c_1 y+c_2 x) .
\] 
We assume that $(x,y)$ belongs to an open subset of $\R^2$ such that $\mu\nu \neq 0$. It appears that the functions
\[
\begin{array}{rl}
f(x,y)=&c_2 ((2+c_1^2-c_2^2)(c_1 y-c_2 x)(\cos(c_1 y+c_2 x)+\sin(c_1 y+c_2 x))
\\[8pt] &
+(c_1^2+c_2^2)(\cos(c_1 y+c_2 x)-\sin(c_1 y+c_2 x))) , \\[12pt] 
g(x,y)= &(2+c_1^2-c_2^2)(c_1 y-c_2 x)(\cos(c_1 y+c_2 x)-\sin(c_1 y+c_2 x))
\\[8pt] &
-(c_1^2+c_2^2)(\cos(c_1 y+c_2 x)+\sin(c_1 y+c_2 x))
\end{array}
\]
are solutions of \eqref{EQ::Systemfi}, which defines the immersion by \eqref{F f ft g gt}.

\appendix
\section{Angles and orthogonal projections}\label{app angle}

\subsection{An alternative construction of the complex angle}
We give here another construction of the complex angle between two oriented and spacelike planes $p$ and $q$ in $\R^{1,3}.$ Let us denote by $\pi:q\rightarrow p$ and $\pi':q\rightarrow p^{\perp}$ the restrictions to $q$ of the orthogonal projections $\R^{1,3}\rightarrow p$ and $\R^{1,3}\rightarrow {p}^{\perp},$ and consider the quadratic forms $Q$ and $Q'$ defined on $q$ by
$$Q(x)=\langle \pi(x),\pi(x)\rangle\hspace{.5cm}\mbox{and}\hspace{.5cm}Q'(x)=\langle \pi'(x),\pi'(x)\rangle$$
for all $x\in q.$ They are linked by the relation $Q(x)+Q'(x)=|x|^2$ for all $x\in q,$ and thus satisfy
\begin{equation}\label{q+qp}
Q(x)+Q'(x)=1
\end{equation}
for all $x\in q,$ $|x|=1.$ There exists a positive orthonormal basis $(u,u^{\perp})$ of $q$ such that 
$$Q(u)=\max_{x\in q,\ |x|=1}Q(x),\hspace{.5cm} Q(u^{\perp})=\min_{x\in q,\ |x|=1}Q(x).$$
By (\ref{q+qp}) this basis is also such that
$$Q'(u)=\min_{x\in q,\ |x|=1}Q'(x),\hspace{.5cm} Q'(u^{\perp})=\max_{x\in q,\ |x|=1}Q'(x),$$
and satisfies
$$\langle\pi(u),\pi(u^{\perp})\rangle=0\hspace{.5cm}\mbox{and}\hspace{.5cm}\langle\pi'(u),\pi'(u^{\perp})\rangle=0.$$
We need to divide the discussion into three main cases, according to the dimension of the range of $\pi'$:
\\
\\\textbf{Case 1: rank$(\pi')$=2.} This condition means that $p\oplus q=\R^{1,3}.$ Since $p^{\perp}$ is timelike, $Q'$ has then signature $(1,1).$ The relation (\ref{q+qp}) then implies that $\pi'(u)$ is timelike, $Q(u)>1$ and $Q(u^{\perp})<1$. We choose moreover $u$ such that $\pi'(u)$ is future-directed: the basis $(u,u^{\perp})$ is then uniquely defined. We then consider the positively oriented and orthonormal basis $(e_0,e_1,e_2,e_3)$ of $\R^{1,3}$ such that $e_0$ is future-directed, $(e_2,e_3)$ is a positive basis of $p$ and
$$\pi'(u)=a_0 e_0,\ \pi'(u^{\perp})=a_1 e_1,\ \pi(u)=a_2e_2,\ \pi(u^{\perp})=a_3e_3$$
for some constants $a_0,a_1,a_2,a_3$ such that $a_0,a_2\geq 0.$ Since
$$u=a_0 e_0+a_2e_2\hspace{.5cm}\mbox{and}\hspace{.5cm}u^{\perp}=a_1 e_1+a_3e_3$$
we have
$$|u|^2=1=-a_0^2+a_2^2\hspace{.5cm}\mbox{and}\hspace{.5cm}|u^{\perp}|^2=1=a_1^2+a_3^2$$
and we may set $\psi_1,\psi_2\in\R,$ $\psi_2\geq 0,$ such that
$$a_0=\sinh\psi_2,\ a_2=\cosh\psi_2,\ a_1=-\sin\psi_1\hspace{.3cm}\mbox{and}\hspace{.3cm}  a_3=\cos\psi_1.$$ 
Since $Q'(u)=-a_0^2,$ $Q'(u^{\perp})=a_1^2$ and the signature of $Q'$ is $(1,1),$ we in fact have $a_0a_1\neq 0,$ that is $\psi_2>0$ and $\psi_1\neq 0\ [\pi].$ With these definitions, we have
\begin{equation}\label{u u perp psi1 psi2}
u=\sinh\psi_2\ e_0+\cosh\psi_2\ e_2\hspace{.3cm}\mbox{and}\hspace{.3cm}u^{\perp}=-\sin\psi_1\ e_1+\cos\psi_1\ e_3
\end{equation}
which easily yields for $\psi:=\psi_1+i\psi_2$
$$q=u\wedge u^{\perp}=\cos\psi\ p+\sin\psi\ V$$
for $V=e_1\wedge e_2.$ 
\\
\\\textbf{Case 2: rank$(\pi')=1$.} In that case $Im(\pi')$ is a line, which may be timelike, spacelike or lightlike: 
\\$\bullet$ $Q'$ has signature $(0,1),$ i.e. $Im(\pi')$ is a timelike line: $q$ belongs to the timelike hyperplane $p\oplus Im(\pi').$ We may then follow the lines of the previous case: the basis $(u,u^{\perp})$ is such that $Q(u)>1$ and $Q(u^{\perp})=1,$ and we may suppose that $\pi'(u)$ is future-directed. We then define the basis $(e_0,e_1,e_2,e_3)$ of $\R^{1,3}$ and the angles $\psi_1$ and $\psi_2$ as above, and observe that $\psi_1=0\ [\pi]$ and $\psi_2>0$ ($Q'(u^{\perp})=a_1^2=0$ here since $Q(u^{\perp})=1$ and by (\ref{q+qp})). The angles $\psi_1$ and $\psi_2$ have the following interpretations: the planes $p$ and $q$ are two oriented planes in the timelike hyperplane $p\oplus Im(\pi');$ this hyperplane is naturally oriented by the orientation of $p$ and the future-orientation of $Im(\pi');$ the lines normal to the oriented planes $p$ and $q$ are thus also oriented, and $\psi_1=0\ [2\pi]$ if these lines are both future-oriented, and $\psi_1=\pi\ [2\pi]$ in the other case. The angle $\psi_2$ is the measure of the lorentzian angle between the future-directed lines normal to $p$ and $q.$
\\$\bullet$ $Q'$ has signature $(1,0),$ i.e. $Im(\pi')$ is a spacelike line: $q$ belongs to the spacelike hyperplane $p\oplus Im(\pi').$ The basis $(u,u^{\perp})$ is such that $Q(u)=1$ and $Q(u^{\perp})<1,$ i.e. $Q'(u)=0$ and $Q'(u^{\perp})>0.$ It is no more possible to suppose here that $\pi'(u)$ is timelike (it is the vector zero!), i.e. $u$ is only defined up to sign. Nevertheless, choosing for $e_0$ the future-directed unit vector orthogonal to $p\oplus\R\pi'(u^{\perp})$ we may construct as above a unique positively oriented and orthonormal basis $(e_0,e_1,e_2,e_3)$ of $\R^{1.3}$ adapted to $\pi'(u^{\perp}),$ $\pi(u)$ and $\pi(u^{\perp}).$ Defining $\psi_1$ and $\psi_2$ as above we easily see that $\psi_2=0$ and $\psi_1$ is defined up to sign and a multiple of $2\pi:$ it is thus represented by a unique real number in $[0,\pi].$ This angle has the following interpretation: the planes $p$ and $q$ are two oriented planes in the spacelike hyperplane $p\oplus Im(\pi')$ which is naturally oriented by the election of its future-directed normal direction in $\R^{1,3}$ and the canonical orientation of $\R^{1,3};$ $\psi_1\in [0,\pi]$ is a measure of the angle of the oriented lines normal to these planes in $p\oplus Im(\pi').$
\\$\bullet$ $Q'$ is zero, i.e. $Im(\pi')$ is a lightlike line: $q$ belongs to the degenerate hyperplane $p\oplus Im(\pi').$ In that case we set $\psi_1=0\ [\pi]$ and $\psi_2=0$: more precisely, a given orientation of $Im(\pi')$ induces an orientation of $p\oplus Im(\pi'),$ and we set $\psi_1=0\ [2\pi]$ if it coincides with the sum of the orientations of $q$ and $Im(\pi'),$ and  $\psi_1=\pi\ [2\pi]$ in the other case.
\\
\\\textbf{Case 3: rank$(\pi')$=0.} In that case $q=\pm p,$ i.e. the planes $p$ and $q$ coincide in $\R^{1,3},$ with the same orientation ($q=p$) or the opposite orientation ($q=-p$); we set $\psi_2=0$ and $\psi_1=0\ [2\pi]$ if $q=p$ or $\psi_1=\pi\ [2\pi]$ if $q=-p.$
\\
\\It is not difficult to verify that in all the cases described above the complex angle $\psi:=\psi_1+i\psi_2$ coincides with the complex angle $\psi$ defined in Section \ref{section def angle}, if we moreover choose the sign of $\psi$ in Definition \ref{def complex angle} such that $\psi_2:=\Im m\ \psi\geq 0.$ We have obtained the following additional informations concerning the relative position of $q$ and $p$ in terms of the angle $\psi=\psi_1+i\psi_2$: 
\begin{prop}\label{prop pos p p0}
\begin{itemize}
\item $q$ does not belong to any hyperplane containing $p$ if and only if $\psi_1\neq 0\ [\pi]$ and $\psi_2>0;$ 
\item $q\neq \pm p$ belongs to a spacelike (resp. timelike) hyperplane containing $p$ if and only if $\psi_1\neq 0\ [\pi]$ and $\psi_2=0$ (resp. $\psi_1=0\ [\pi]$ and $\psi_2>0$);
\item $q$ belongs to a null hyperplane containing $p$ if and only if $\psi_1=0\ [\pi]$ and $\psi_2=0.$
\end{itemize}
\end{prop}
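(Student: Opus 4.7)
The plan is to deduce the three equivalences directly from the five-fold case analysis of Section~\ref{app angle} leading to the alternative construction of $\psi$. That analysis already associates to each pair $(p,q)$ an explicit pair $(\psi_1,\psi_2)\in\R\times\R_{\geq 0}$ by inspecting the rank of the restricted projection $\pi':q\to p^{\perp}$ and, when that rank equals one, the causal character of $\mathrm{Im}(\pi')$. Hence the proposition is essentially the resulting table of correspondences read in both directions.

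First I would match each geometric configuration to the corresponding case. The property that $q$ lies in no hyperplane containing $p$ is equivalent to $p\oplus q=\R^{1,3}$, i.e.\ $\mathrm{rank}(\pi')=2$, which is Case~1. The property that $q\neq\pm p$ lies in a spacelike (resp.\ timelike, resp.\ null) hyperplane containing $p$ is equivalent to $\mathrm{Im}(\pi')$ being a nonzero spacelike (resp.\ timelike, resp.\ lightlike) line, i.e.\ one of the three sub-cases of Case~2. Finally $q=\pm p$ is Case~3; it belongs to every hyperplane through $p$, in particular to a null one, so it falls under the third bullet.

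Next I would reread the analysis to collect the $(\psi_1,\psi_2)$ data produced by each case: Case~1 yields $\psi_1\neq 0\ [\pi]$ and $\psi_2>0$; Case~2 timelike yields $\psi_1=0\ [\pi]$ and $\psi_2>0$; Case~2 spacelike yields $\psi_1\neq 0\ [\pi]$ and $\psi_2=0$; and both Case~2 lightlike and Case~3 yield $\psi_1=0\ [\pi]$ and $\psi_2=0$. Substituting this data into each bullet delivers the forward implications immediately.

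The main (and only) thing that needs some care is the reverse direction, which amounts to showing that the five sub-cases exhaust all relative positions and that their associated Boolean data ``$\psi_1=0\ [\pi]$'' and ``$\psi_2=0$'' are pairwise distinct, except for Case~2 lightlike and Case~3 which are grouped together in the third bullet. Exhaustion is clear because rank and signature already index every possibility for $\pi'$; distinctness is immediate from inspection, since the four possible values of the two Boolean conditions are each realised by exactly one of the four distinct outputs listed above. This establishes the bijective correspondence between geometric configuration and admissible $(\psi_1,\psi_2)$, and all three equivalences of the proposition then follow at once.
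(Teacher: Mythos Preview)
Your proposal is correct and follows exactly the paper's own approach: the paper presents this proposition as a direct summary of the preceding five-fold case analysis in Section~\ref{app angle}, and you have simply made the resulting correspondence table explicit and checked that it is bijective (after grouping Case~2 lightlike with Case~3). Your observation that Case~2 spacelike forces $\psi_1\neq 0\ [\pi]$ (because $Q'(u^{\perp})>0$ gives $a_1\neq 0$) is a detail the paper leaves implicit but which is needed for the distinctness argument, so it is good that you spelled it out.
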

\subsection{An elementary characterization of the two real angles}
We still suppose that $p$ and $q$ are oriented spacelike planes in $\R^{1,3}$ and we consider their real angles $\psi_1,\psi_2$ constructed above.The following descriptions of $\psi_1$ and $\psi_2$ are very similar to the elementary definitions of the two principal angles between two oriented planes in $\R^4$ (see for example \cite{BdSOR}).
\begin{prop} We have
$$\cosh\psi_2=\sup_{x\in q,\ y\in p,\ |x|=|y|=1}\langle x,y\rangle.$$
Moreover, if $u\in q$ and $v\in p,$ $|u|=|v|=1$ are such that $\cosh\psi_2=\langle u,v\rangle,$ then
$$\cos\psi_1=\langle u^{\perp},v^{\perp}\rangle$$
where $u^{\perp}$ and $v^{\perp}$ are unit vectors in $q$ and $p$ such that $(u,u^{\perp})$ and $(v,v^{\perp})$ are positive orthonormal bases of $q$ and $p.$
\end{prop}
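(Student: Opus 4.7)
The plan is to exploit the adapted orthonormal basis $(e_0, e_1, e_2, e_3)$ of the alternative construction, in which $(e_2, e_3)$ is a positive basis of $p$ and, by \eqref{u u perp psi1 psi2},
$$u = \sinh\psi_2\, e_0 + \cosh\psi_2\, e_2, \qquad u^\perp = -\sin\psi_1\, e_1 + \cos\psi_1\, e_3,$$
so that the orthogonal projection $\pi : q \to p$ reads $\pi(u) = \cosh\psi_2\, e_2$ and $\pi(u^\perp) = \cos\psi_1\, e_3$. Both identities of the proposition will then reduce to short linear-algebra computations in this basis.

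First I would reduce the maximization to the Euclidean plane $p$. For $x \in q$ and $y \in p$, the decomposition $x = \pi(x) + \pi'(x)$ with $\pi'(x) \in p^\perp$ gives $\langle x, y\rangle = \langle \pi(x), y\rangle$, and Cauchy-Schwarz in $p$ then yields $\langle x, y\rangle \le |\pi(x)|$ for $|y| = 1$, with equality iff $y$ is a positive multiple of $\pi(x)$. Taking next the supremum over unit $x \in q$ reduces the question to $\sup \sqrt{Q(x)} = \sqrt{Q(u)} = \cosh\psi_2$ by the defining property of $u$; this proves the first identity and identifies the maximizing pairs as $x = \epsilon u$, $y = \epsilon e_2$ with $\epsilon = \pm 1$.

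For the second identity, at such a pair one must take $v^\perp = \epsilon e_3$ (to keep $(v, v^\perp)$ positively oriented in $p$) and the corresponding $u^\perp$ equal to $\epsilon$ times the vector displayed above, and a direct inner-product computation yields $\langle u^\perp, v^\perp\rangle = \cos\psi_1$.

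The main obstacle is handling the degenerate situations classified in the previous subsection (rank $\pi' \le 1$), where $u$ may only be defined up to sign and the supremum may be attained on a continuum of pairs. In each case the explicit form of $u, u^\perp$ remains valid for the conventions fixed there, and the same verification applies; for instance in the fully null case $Q \equiv 1$ on the unit circle of $q$ but $\pi'$ takes values in a null line, so $\langle \pi'(u), \pi'(u^\perp)\rangle = 0$ forces $\pi(u^\perp) \perp \pi(u)$, yielding again $\langle u^\perp, v^\perp\rangle = \pm 1 = \cos\psi_1$.
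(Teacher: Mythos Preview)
Your argument is correct and follows essentially the same route as the paper: both use the adapted basis $(e_0,e_1,e_2,e_3)$ from the alternative construction, reduce $\langle x,y\rangle$ to $\langle \pi(x),y\rangle$ and apply Cauchy--Schwarz in the Euclidean plane $p$, then identify the maximizing pair as $(u,e_2)$ and read off $\langle u^{\perp},v^{\perp}\rangle=\cos\psi_1$ directly from \eqref{u u perp psi1 psi2}. The only difference is that the paper restricts ``for simplicity'' to the generic case $\mathrm{rank}(\pi')=2$, whereas you go on to discuss the degenerate ranks; your treatment of those cases is sound and makes the proof slightly more complete than the paper's own version.
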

\begin{proof}
We assume for simplicity that rank($\pi'$)=2 and keep the notations introduced in the previous section to study that case. If $x\in q$ and $y\in p$ are unit vectors, we have
$$\langle x, y\rangle=\langle \pi(x),y\rangle\leq |\pi(x)|\leq \sup_{x\in q,\ |x|=1}|\pi(x)|=\cosh\psi_2.$$
Moreover, the equality holds for $x=u=\sinh\psi_2\ e_0+\cosh\psi_2\ e_2$ and $y=v=e_2$ where $(e_0,e_1,e_2,e_3)$ is the basis defined above (Case 1). This proves the first claim. Since $u^{\perp}=-\sin\psi_1\ e_1+\cos\psi_1\ e_3$ (by (\ref{u u perp psi1 psi2})) and $v^{\perp}=e_3$ the last claim readily follows.
\end{proof}
\section{Characterization of surfaces of constant angle $\psi=0\ [\pi]$}\label{app case psi 0 mod pi}
We study here the very special case of the surfaces with regular Gauss map and constant angle $\psi=0\ [\pi]$ with respect to some given spacelike plane $p_o.$ The result is the following:
\begin{prop}
A connected and spacelike surface with regular Gauss map has a constant angle $\psi=0\ [\pi]$ with respect to a fixed spacelike plane $p_o$ if and only if it belongs to a affine hyperplane $x_o+p_o\oplus L$ where $x_o$ is a point of $\R^{1,3}$ and $L$ is one of the two null lines of the timelike plane $p_o^{\perp}.$
\end{prop}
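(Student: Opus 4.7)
The plan is to prove the forward direction by exploiting the holomorphic nature of the Gauss map, and to obtain the converse directly from Remark \ref{rem psi degenerate hyperplane}.

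For the forward direction, I would start from Proposition \ref{prop q p psi}(b), which gives $G(x) = \epsilon\, p_o + \xi(x)$ with $\epsilon \in \{\pm 1\}$ constant (determined by $\cos\psi = \epsilon$) and $\xi(x) \in T_{p_o}\mathcal{Q}$ null. The key geometric observation is that the null cone inside the 2-dimensional complex vector space $T_{p_o}\mathcal{Q}$ factors as a union $C_+ \cup C_-$ of two complex lines meeting only at the origin, each corresponding to one of the two null directions of $p_o^{\perp}$ (concretely, if one takes $p_o = I$, then $T_{p_o}\mathcal{Q} = \{Z_2 J + Z_3 K\}$ and the null condition $Z_2^2 + Z_3^2 = 0$ cuts out the two lines $Z_3 = \pm i Z_2$). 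Since a constant-angle surface has $K = K_N = 0$, Section \ref{subsection gauss map} provides a complex structure $\mathcal{J}$ on $M$ for which $G$ is holomorphic, so $\xi$ is a holomorphic map from the connected Riemann surface $(M, \mathcal{J})$ into $T_{p_o}\mathcal{Q}$. Decomposing $\xi = \alpha + \beta$ along the direct sum $T_{p_o}\mathcal{Q} = C_+ \oplus C_-$ and noting that $H$ vanishes on each line and is nondegenerate across them, the condition $H(\xi, \xi) \equiv 0$ reduces to $\alpha\,\beta \equiv 0$ as holomorphic $\C$-valued functions on $M$; the identity theorem then forces $\alpha \equiv 0$ or $\beta \equiv 0$, so a single null line $L$ of $p_o^{\perp}$ serves the whole surface and $T_xM \subset p_o \oplus L$ for every $x \in M$. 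Picking any nonzero $v \in L$, the identity $(p_o \oplus L)^{\perp} = L = \R v$ yields $\langle dF_x(w), v\rangle = 0$ for every tangent $w$, so $\langle F, v\rangle$ is constant on the connected $M$, placing $M$ inside the affine hyperplane $F(x_o) + p_o \oplus L$.

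The converse is almost immediate: if $M \subset x_o + p_o \oplus L$, each tangent plane $T_xM$ is a spacelike 2-plane sitting in the degenerate hyperplane $p_o \oplus L$ which also contains $p_o$, so Remark \ref{rem psi degenerate hyperplane} forces the complex angle between $T_xM$ and $p_o$ to lie in $\pi\Z$ pointwise, and by continuity of $G$ together with connectedness of $M$ this common value is constant (either $0$ or $\pi$ modulo $2\pi$).

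The main obstacle is precisely the global constancy step in the forward direction: the Remark only supplies, at each point, \emph{some} degenerate hyperplane $p_o \oplus L(x)$ with $L(x) \in \{L_+, L_-\}$, and a purely topological argument for constancy of $L(x)$ is obstructed by the potential locus $\{x : T_xM = p_o\}$, where both choices become simultaneously valid and could in principle permit a continuous switch between branches. The holomorphic approach bypasses this uniformly by reformulating the two-branch alternative as an algebraic identity $\alpha\,\beta \equiv 0$ between holomorphic functions on $(M,\mathcal{J})$, whose resolution via the identity theorem rests exactly on the regular Gauss map hypothesis.
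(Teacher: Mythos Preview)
Your proof is correct and follows essentially the same route as the paper's: both arguments write $G-\epsilon p_o$ as a null vector in $T_{p_o}\mathcal{Q}$, split the null cone there into the two complex lines corresponding to the two null directions of $p_o^\perp$, and use the regular Gauss map hypothesis together with connectedness to force the image into a single line. The only tactical difference is that the paper passes to $G'$ and argues by continuity and nonvanishing, whereas you phrase the dichotomy as $\alpha\beta\equiv 0$ for holomorphic functions and invoke the identity theorem---which is in fact the cleaner way to justify the paper's ``thus $G'(z)$ also belongs to $\C\xi\cup\C\xi'$'' step.
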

\begin{proof}
We only prove the "only if" direction (the converse is trivial). Let us fix a positive orthonormal basis $(u,u^{\perp})$ of $p_o$ and a positive basis $(N,N')$ of $p_o^{\perp}$ such that 
$$\langle N,N\rangle=\langle N',N'\rangle=0\hspace{.5cm} \mbox{and} \hspace{.5cm}\langle N,N'\rangle=-1,$$
and set 
$$\xi=u\wedge N \hspace{.5cm} \mbox{and} \hspace{.5cm}\xi'=u^{\perp}\wedge N'.$$
$(\xi,\xi')$ form a basis of the $\C$-linear space $T_{p_o}\mathcal{Q}$ such that
$$H(\xi,\xi)=H(\xi',\xi')=0 \hspace{.5cm} \mbox{and} \hspace{.5cm}H(\xi,\xi')=i.$$
Recall the definition of the complex parameter $z$ at the end of Section \ref{subsection gauss map}. Since $H(G(z),p_o)=\cos\psi=1$ we have 
$$H(G(z)-p_o,p_o)=H(G(z),p_o)-H(p_o,p_o)=1-1=0$$
and
\begin{eqnarray*}
H(G(z)-p_o,G(z)-p_o)&=&H(G(z),G(z))+H(p_o,p_o)-2H(G(z),p_o)\\
&=&1+1-2\\
&=&0,
\end{eqnarray*}
which means that $G(z)-p_o$ belongs to one of the two complex lines $\C\xi$ and $\C\xi'$ of $T_{p_o}\mathcal{Q}.$ Thus $G'(z)$ also belongs $\C\xi\cup \C\xi'$ and by continuity $G'(z)$ belongs to $\C\xi$ for all value of $z,$ or $G'(z)$ belongs to $\C\xi'$ for all value of $z$ ($G'$ does not vanish since $G$ is assumed to be regular!). If $G'$ belongs to $\C\xi$ then $G$ belongs to $p_o+\C\xi,$ which means that all the tangent planes of $M$ belongs the hyperplane $p_o\oplus L$ with $L=\R N.$ This finally easily implies that $M$ belongs to an affine hyperplane $x_o+ p_o\oplus L$. 
\end{proof}

\section{Details for the computations of the adapted frame}\label{App::CompFrame} 

Let us briefly describe here the computations leading to the explicit formulas for the special frame $(T_1,T_2,N_1,N_2)$ adapted to a complex angle surface. Recalling Theorem \ref{thm representation 2}, we have
\[
T_1:=\frac{1}{\mu}\partial_x F  = \frac{1}{\mu}\xi(\partial_x)=\frac{1}{\mu}g^{-1} (\omega_1(\partial_x)J+\omega_2(\partial_x)K)\ \widehat{g} =g^{-1} (\sin(u) J-\cos(u)K)\ \widehat{g},
\]
where $g,\widehat{g}$ and $u$ are given by the formulas
\begin{eqnarray*}
g^{-1}&= \bar{g}=&-\cos\frac{\psi}{2}\ \sin\left(z\tan\frac{\psi}{2}\right)1-\cos\frac{\psi}{2}\ \cos\left(z\tan\frac{\psi}{2}\right)I\\
&&-\sin\frac{\psi}{2}\ \cos\left(z\cot\frac{\psi}{2}\right)J+\sin\frac{\psi}{2}\ \sin\left(z\cot\frac{\psi}{2}\right)K,
\end{eqnarray*}
\begin{eqnarray*}
\widehat{g}&=&-\cos\frac{\bar{\psi}}{2}\ \sin\left(\bar{z}\tan\frac{\bar{\psi}}{2}\right)1+\cos\frac{\bar{\psi}}{2}\ \cos\left(\bar{z}\tan\frac{\bar{\psi}}{2}\right)I\\
&&+\sin\frac{\bar{\psi}}{2}\ \cos\left(\bar{z}\cot\frac{\bar{\psi}}{2}\right)J-\sin\frac{\bar{\psi}}{2}\ \sin\left(\bar{z}\cot\frac{\bar{\psi}}{2}\right)K
\end{eqnarray*}
and 
$$u=-2\re(z \cot(\psi)).$$ 
Direct and long computations (or the use of a software of symbolic computation) lead to the formula
\[
\begin{array}{rl}
T_1=& 
-\sin \left(\dfrac{\psi - \bar{\psi}}{2} \right) 
\cos \left(z \csc \psi-\bar{z}\csc \bar{\psi}\right)
 1+ \sin \left(\dfrac{\psi - \bar{\psi}}{2} \right)  \sin \left(z\csc \psi -\bar{z}\csc \bar{\psi}\right)I
\\[8pt] & 
+  \cos\left(\dfrac{\psi - \bar{\psi}}{2} \right)  \sin \left(z\csc\psi+\bar{z}\csc\bar{\psi}\right)
 J+ \cos\left(\dfrac{\psi - \bar{\psi}}{2} \right)\cos \left(z\csc\psi+\bar{z} \csc\bar{\psi}\right)K
\end{array}
\]
where $\csc (\psi)=1/\sin\psi.$ Writing $\psi=\psi_1+i\psi_2$ and $z/\sin(\psi)=\varphi_1+i\varphi_2,$ we finally obtain
$$T_1=-i\sinh \psi_2 \cosh  \varphi_2 1- \sinh \psi_2  \sinh \varphi_2 I+\cosh \psi_2  \sin \varphi_1 J+\cosh\psi_2 \cos\varphi_1 K$$
which is the expression given in Section \ref{section explicit expression frame}. Similar computations give the expression of $T_2.$ We now explain how to determine $N_1$ and $N_2.$ On one side, we have by (\ref{ff Ti}) and (\ref{dG w})
$$dG(T_1)=\mathbf{II}(T_1,T_1)\wedge T_2=\frac{2}{\mu}\ N_1\wedge T_2$$
and
$$dG(T_2)=T_1\wedge \mathbf{II}(T_2,T_2)=\frac{2}{\nu}\ T_1\wedge N_2.$$
On the other side,  since $dG=G'dz$ we have
$$dG(T_1)=\frac{1}{\mu}dG(\partial_x)=\frac{1}{\mu}G'=\frac{2}{\mu}J\{-\sin\varphi+\cos\varphi I\}$$
and
$$dG(T_2)=\frac{1}{\nu}dG(\partial_y)=\frac{i}{\nu}G'=\frac{2i}{\nu}J\{-\sin\varphi+\cos\varphi I\}.$$
We deduce that, in $\HC,$
$$N_1=-J\{-\sin\varphi+\cos\varphi I\}T_2$$
and
$$N_2=iT_1J\{-\sin\overline{\varphi}+\cos\overline{\varphi} I\}.$$
The expressions of $N_1$ and $N_2$ then follow from the expressions of $T_1$ and $T_2,$ and direct computations.
\\
\\
\noindent\textbf{Acknowledgments:} P. Bayard was supported by the project PAPIIT-UNAM IA106218, J. Monterde was partially supported by the Spanish Ministry of Economy and Competitiveness DGICYT grant MTM2015-64013 and R. C. Volpe was supported by the Generalitat Valenciana VALi+D grant ACIF/2016/342.

\end{document}